\documentclass{amsart}
\usepackage{amssymb}
\usepackage{amsfonts}

\setcounter{MaxMatrixCols}{10}

\newtheorem{theorem}{Theorem}
\theoremstyle{plain}

\newtheorem{corollary}{Corollary}

\newtheorem{lemma}{Lemma}

\newtheorem{proposition}{Proposition}

\numberwithin{equation}{section}
\input{tcilatex}

\begin{document}
\title[Speedups of ergodic group extensions]{Speedups of ergodic group
extensions}
\author{Andrey Babichev}
\address{Wesleyan University\\
Middletown, CT}
\email{ababichev@wesleyan.edu}
\author{Robert M. Burton}
\address{Oregon State University\\
Corvallis, OR}
\email{dr.bob.math@gmail.com}
\author{Adam Fieldsteel}
\address{Wesleyan University\\
Middletown, CT}
\email{afieldsteel@wesleyan.edu}
\date{December 18, 2011}
\subjclass{}
\keywords{ergodic transformation, cocycle, group extension}
\dedicatory{We dedicate this paper to the memory of our great friend and
teacher, Dan Rudolph.}
\thanks{The authors gratefully acknowledge the assistance of the anonymous
referee, whose contributions enormously improved this work.}

\begin{abstract}
We prove that for all ergodic extensions $S_{1}$ of a transformation by a
locally compact second countable group $G,$ and for all $G-$extensions $%
S_{2} $ of an aperiodic transformation, there is a relative speedup of $%
S_{1} $ that is relatively isomorphic to $S_{2}$. We apply this result to
give necessary and sufficient conditions for two ergodic $n-$point or
countable extensions to be related in this way.
\end{abstract}

\maketitle

\section{Introduction}

Let $\left( X,\mathcal{B},\mu \right) $ be a Lebesgue probability space and $%
T:X\rightarrow X$ an ergodic $\mu -$preserving automorphism. By a \emph{%
speedup} of $T$ we mean an automorphism of $X$ of the form $x\mapsto
T^{p\left( x\right) }\left( x\right) ,$ where $p\ $is a positive
integer-valued function on $X.$ We denote such an automorphism by $T^{p}.$
It is natural to ask which automorphisms (up to isomorphism) can be obtained
from $T$ in this way. If $p$ is integrable there are significant
restrictions on the possible speedups of $T.$ It was proved in \cite{N}, for
example, that if $S$ is isomorphic to $T^{p\left( x\right) }$ and $\int
pd\mu <\infty ,$ then the entropies of $S$ and $T$ satisfy $h\left( S\right)
=\left( \int pd\mu \right) h\left( T\right) .$ As another example, from \cite%
{OW} we see that if $S$ is isomorphic to $T^{p\left( x\right) }$ and $\int
pd\mu <\infty ,$ then $T$ is a factor of a finite measure preserving
transformation that induces $S.$ Thus, if $S$ is loosely Bernoulli, it can
only be expressed as an integrable speedup of $T$ if $T$ is also loosely
Bernoulli. However, if $p$ is not required to be integrable, then there are
no obstructions to this relation; in \cite{AOW} the general result was
proved that for all ergodic finite measure preserving automorphisms $T$ and
all aperiodic finite measure preserving $S,$ there is a speedup of $T$ that
is isomorphic to $S.$ In this paper we prove a conditional version of that
result in the case of ergodic group extensions, and we give an application
of this result to the classification of ergodic finite extensions.

Suppose that $\left( X,\mathcal{B},\mu \right) $ and $T$ are as above, and $%
T $ is a factor of an automorphism $S$ of the space $\left( Y,\mathcal{C}%
,\nu \right) $. Then by a speedup of $S$ \emph{relative to} $T$ we mean a
speedup $S^{p}$ where $p$ is measurable with respect to the factor $\left( X,%
\mathcal{B},\mu \right) $. Of particular interest to us it the case where $S$
is a group extension of $T$ by a locally compact second countable group $G.$
We recall some basic definitions:

Let $G$ be a locally compact second countable group with left Haar measure $%
\lambda $, and let $\sigma :X\times \mathbb{Z}\rightarrow G$ be a cocycle
for $T.$ That is, $\sigma $ is a measurable function such that for almost
all $x\in X$ and all $n,m\in \mathbb{Z}$%
\begin{equation}
\sigma \left( x,m+n\right) =\sigma \left( T^{n}x,m\right) \sigma \left(
x,n\right) .  \label{cocycle condition}
\end{equation}%
From $T$ and $\sigma $ we obtain an automorphism $T_{\sigma }$ of $\left(
X\times G,\mu \times \lambda \right) $ such that for all $n\in \mathbb{Z},$ 
\begin{equation*}
\left( T_{\sigma }\right) ^{n}\left( x,g\right) =\left( T^{n}x,\sigma \left(
x,n\right) g\right)
\end{equation*}%
which has $\left( T,X\right) $ as a factor. We refer to the map $T_{\sigma }$
as a $G-$extension of $T,$ and to $\left( T,X\right) $ as the base factor of
the extension$.$ We write $\sigma ^{\left( n\right) }$ for the function 
\begin{equation*}
\sigma ^{\left( n\right) }:x\mapsto \sigma \left( x,n\right)
\end{equation*}%
and note that $\sigma $ is determined by the function $\sigma ^{\left(
1\right) },$ because of condition (\ref{cocycle condition})$.$

Given a cocycle $\sigma $ for $T$ and a measurable function $\alpha
:X\rightarrow G$ we obtain a new cocycle for $T,$ which we denote by $\sigma
^{\alpha }$, by setting%
\begin{equation*}
\sigma ^{\alpha }\left( x,n\right) =\alpha \left( T^{n}x\right) \sigma
\left( x,n\right) \left( \alpha \left( x\right) \right) ^{-1}.
\end{equation*}%
\bigskip Thus, given two functions $\alpha ,\beta :X\rightarrow G,$ we have 
\begin{equation*}
\left( \sigma ^{\alpha }\right) ^{\beta }=\sigma ^{\beta \alpha }.
\end{equation*}%
Two cocycles $\sigma $ and $\sigma ^{\prime }$ for $T$ are said to be
cohomologous if there exists a measurable function $\alpha :X\rightarrow G$
such that $\sigma ^{\prime }=\sigma ^{\alpha }.$ In this case we say $\sigma
^{\prime }$ is cohomologous to $\sigma $ by the transfer function $\alpha .$

We say that two $G-$extensions $T_{\sigma }$ and $T_{\sigma ^{\prime
}}^{\prime }$ on spaces $X$ and $X^{\prime }$ are $G-$isomorphic if there is
an isomorphism $\Phi :X\times G\rightarrow X^{\prime }\times G$ between $%
T_{\sigma }$ and $T_{\sigma ^{\prime }}^{\prime }$ of the form 
\begin{equation*}
\Phi \left( x,g\right) =\left( \phi \left( x\right) ,\alpha \left( x\right)
g\right)
\end{equation*}%
where $\phi :X\rightarrow X^{\prime }$ is an isomorphism between $T$ and $%
T^{\prime }$ and $\alpha :X\rightarrow G$ is a measurable function$.$ This
is the case precisely when there is an isomorphism $\phi $ between $T$ and $%
T^{\prime }$ such that the cocycle $\sigma ^{\prime }\phi $ for $T$ is
cohomologous to $\sigma $ by the transfer function $\alpha ,$ where $\sigma
^{\prime }\phi $ is given by 
\begin{equation*}
\sigma ^{\prime }\phi \left( x,n\right) =\sigma ^{\prime }\left( \phi
x,n\right) .
\end{equation*}

Given a $G-$extension $T_{\sigma }$ we consider speedups of $T_{\sigma }$
relative to the base factor $\left( T,X\right) $. Each such relative speedup
of $T_{\sigma }$ determines and is determined by a speedup of the factor $T.$
Thus if $\left( T_{\sigma }\right) ^{p}$ is a relative speedup of $T_{\sigma
},$ we have $\left( T_{\sigma }\right) ^{p}=\left( T^{p}\right) _{\sigma
^{\left( p\right) }}$, where $\sigma ^{\left( p\right) }$ is the cocyle for $%
T^{p}$ determined by the values 
\begin{equation*}
\sigma ^{\left( p\right) }\left( x,1\right) =\sigma \left( x,p\left(
x\right) \right) .
\end{equation*}

Our first goal is to prove the following theorem:

\begin{theorem}
\label{gp ext unif}Let $T_{\sigma }$ and $T_{\sigma ^{\prime }}^{\prime }$
be $G-$extensions of $\left( T,X,\mathcal{B},\mu \right) $ and $\left(
T^{\prime },X^{\prime },\mathcal{B}^{\prime },\mu ^{\prime }\right) ,$ where 
$G$ is a locally compact second countable group. Suppose that $T_{\sigma }$
is ergodic and $T^{\prime }$ is aperiodic. Let $U$ be a neighborhood of $%
e_{G}.$ Then there is a relative speedup of $T_{\sigma }$ which is $G-$%
isomorphic to $T_{\sigma ^{\prime }}^{\prime }$ by a $G-$isomorphism whose
transfer function $\alpha $ satisfies $\alpha \left( x\right) \in U$ a.e.
\end{theorem}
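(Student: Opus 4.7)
The plan is a cutting-and-stacking construction that simultaneously builds the speedup function $p$, the base isomorphism $\phi$ of $T^p$ with $T'$, and the transfer function $\alpha$ taking values in $U$, so that in the limit
\begin{equation*}
\alpha(T^{p}x)\,\sigma(x,p(x))\,\alpha(x)^{-1}=\sigma'(\phi x,1)
\end{equation*}
almost everywhere. This combines the base-level speedup strategy of \cite{AOW} with a cocycle-matching procedure driven by the ergodicity of $T_\sigma$. I would work inductively, at each stage $k$ fixing $p$, $\phi$, $\alpha$ on a set of measure approaching $1$, never revisiting the already-fixed values; the Rokhlin lemma for $T$ (ergodic) and for $T'$ (aperiodic) supplies the towers in which the construction is carried out.

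The key finite-level building block is a tower-matching lemma: given a Rokhlin tower $F,TF,\ldots,T^{N-1}F$ for $T$, a Rokhlin tower of matching shape for $T'$, a prescribed cocycle on the $T'$-tower, and a neighborhood $V\subset U$ of $e_G$, one can choose a column-wise speedup $p$ on the $T$-tower and a transfer $\alpha\in V$ realizing the prescribed cocycle on the matched levels. The ergodicity of $T_\sigma$ on $(X\times G,\mu\times\lambda)$ enters decisively here via Hopf recurrence: for $\mu$-a.e.\ $x$, every $A\subset X$ of positive measure, and every $g\in G$, there are arbitrarily large $n$ with $T^n x\in A$ and $\sigma(x,n)\in gV$. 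Taking such $n$'s as the return times for the speedup allows us to place $\sigma(x,p(x))$ within a $V$-sized error of the prescribed value $\sigma'(\phi x,1)$, with the residual error absorbed into $\alpha$.

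The principal obstacle is that the choice of $p(x)$ must satisfy two competing demands at once: the combinatorial demand (i) that $T^p$, under $\phi$, extends the partial base isomorphism with $T'$, i.e.\ the speedup partitions $T$-orbits into blocks matching the $T'$-orbit block structure; and the analytic demand (ii) that $\sigma(x,p(x))$ hit the prescribed value of $\sigma'\phi$ to within $V$. Constraint (i) picks out a specific return-time set dictated by the tower architecture, while (ii) picks out a specific cocycle-value set; the two must intersect nontrivially. The resolution is to choose very tall Rokhlin towers at each stage, so that each base cell contains many candidate return times, from which one can select an $n$ lying in both sets by Hopf recurrence. Choosing $V_k\subset U$ at every stage, so that every committed value of $\alpha$ already lies in $U$, and invoking a Borel--Cantelli argument on the residual measures of the set unfixed at stage $k$, then yields a measurable $p$, a measure-preserving $\phi$ conjugating $T^p$ with $T'$, and an $\alpha$ with $\alpha(x)\in U$ a.e.
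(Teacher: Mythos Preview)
Your overall strategy---an AOW-style cutting-and-stacking, with the ergodicity of $T_\sigma$ supplying return times at which $\sigma(x,n)$ lands in any prescribed coset $gV$---is the right one, and the recurrence statement you invoke is correct and is exactly the engine of the construction. But the claim that $p$, $\phi$, and $\alpha$ can all be fixed once and for all on sets of increasing measure, ``never revisiting the already-fixed values,'' conceals the main difficulty and does not survive scrutiny.

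The problem is with $\phi$. In the AOW framework the castles $\mathcal{C}_k'$ are built for $T'$ (not for $T$) and copied into $X$ as castles $\mathcal{C}_k$ for the partial speedup $T_k$; the partial isomorphism $\phi_k:|\mathcal{C}_k|\to|\mathcal{C}_k'|$ is determined by an arbitrary choice on the \emph{base} of each tower and then extended equivariantly by $\phi_k T_k = T'\phi_k$. When one passes to $\mathcal{C}_{k+1}\geq\mathcal{C}_k$, the equivariance constraint coming from each $\mathcal{C}_k$-column inside a given $\mathcal{C}_{k+1}$-tower forces a specific value of $\phi_{k+1}$ on the new base, and the constraints coming from distinct $\mathcal{C}_k$-columns are generically incompatible (they would impose relations on $\phi_k$ that were never arranged). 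Hence $\phi_{k+1}$ can agree with $\phi_k$ only \emph{set-wise} on the levels of $\mathcal{C}_k$, not pointwise. Since $\phi_{k+1}(x)\neq\phi_k(x)$, the target value $\sigma'(\phi_{k+1}x,n)$ has moved, and the $\alpha_k$ that made the cohomology identity exact for $\phi_k$ no longer does so for $\phi_{k+1}$: $\alpha$ must be corrected at every stage, on all of $|\mathcal{C}_k|$, not merely on the residual set. Your Borel--Cantelli remark therefore does not address the issue.

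What makes these corrections summable---and keeps $\alpha$ inside $U$---is a preparatory step you omit: one first refines the castles $\mathcal{C}_k'$ so that $\sigma'(\cdot,n)$ is nearly constant (within a shrinking neighborhood $U_k$ of $e_G$) on each level. Then, since $\phi_{k+1}(x)$ and $\phi_k(x)$ lie in the same level, the discrepancy between $\sigma'(\phi_{k+1}x,n)$ and $\sigma'(\phi_k x,n)$ is of order $\varepsilon_k$ in a complete \emph{right-invariant} metric $\delta$ on $G$, the corrections to $\alpha_k$ are uniformly small, the sequence $(\alpha_k)$ is $\delta$-Cauchy, and its limit $\alpha$ lies in $U$. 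A further uniform-continuity choice of the $\varepsilon_k$ (governed by the compact set carrying the already-committed values of $\alpha_k$ and $\sigma_k$) is needed to pass to the limit in the cohomology identity. Without the near-constancy lemma and the Cauchy argument in a complete one-sided invariant metric, there is no mechanism keeping $\alpha$ inside $U$, and your sketch does not close.
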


We remark that, as shown by Herman and Zimmer $\left( \text{\cite{H}},\text{%
\cite{Z}}\right) ,$ a locally compact second countable group $G$ admits
ergodic $G-$extensions if and only if it is amenable.

Our original proof of this theorem in the case of ergodic $G-$extensions for
compact $G$ \cite{BF} used techniques derived from the restricted orbit
equivalence theory of Rudolph and Kammeyer \cite{R},\cite{KR1},\cite{KR2}$,$
(and so ultimately from Ornstein's isomorphism theorem \cite{O}). However,
as the referee has generously pointed out, a far simpler proof is available
using the methods to be found in \cite{AOW}$,$ which yields a stronger
result, and we present that argument here.

We note that theorem \ref{gp ext unif} may be viewed as an analogue of the
orbit equivalence result for $G-$extensions obtained in \cite{F}, which was
also obtained by other methods in \cite{G}$.$ The point of view which we
take here has much in common with that of Golodets and Sinel'shchikov in
their paper \cite{GS}, which deals with questions of orbit equivalence. In 
\cite{G} a classification of finite extensions up to factor orbit
equivalence was given, and after proving theorem \ref{gp ext unif} we will
adapt the methods of \cite{G} to give an analogous classification of finite
extensions with respect to the speedup relation we have introduced here.

\section{Technical preliminaries}

We use the following terminology. A \emph{Rokhlin tower} $\mathcal{T}$ (or
simply\emph{\ tower}) for an automorphism $T$ on $\left( X,\mathcal{B},\mu
\right) $ is a pairwise disjoint collection $\left\{ A_{i}\right\}
_{i=1}^{h} $ of measurable sets in $X$ such that for each $i,$ $T\left(
A_{i}\right) =A_{i+1}.$ Each $A_{i}\in \mathcal{T}$ is called a \emph{level}
of $\mathcal{T}$, $A_{1}$ is the \emph{base}, $h=h\left( \mathcal{T}\right) $
is the \emph{height}, and the common value $\mu \left( A_{i}\right) $ is the 
\emph{width} $w_{\mathcal{T}}$ of $\mathcal{T}$. We let $\left\vert \mathcal{%
T}\right\vert =\dbigcup_{i=1}^{h}T^{i}A_{1}$ and $\left\vert \mathcal{T}%
\right\vert ^{o}=\dbigcup_{i=1}^{h-1}T^{i}A_{1}$. A \emph{column} of $%
\mathcal{T}$ is a tower of the form $\left\{ T^{i}B\right\} _{i=0}^{h-1}$,
where $B$ is a measurable subset of the base of $\mathcal{T}$.

A \emph{castle} for $T$ is a finite collection $\mathcal{C=}\left\{ \mathcal{%
T}_{j}\right\} _{j=1}^{J}$ of towers for $T$ such that $\left\vert \mathcal{T%
}_{j_{1}}\right\vert \cap \left\vert \mathcal{T}_{j_{2}}\right\vert
=\emptyset $ for all $j_{1}\neq j_{2}.$ We let $\left\vert \mathcal{C}%
\right\vert =\dbigcup_{j=1}^{J}\left\vert \mathcal{T}_{j}\right\vert $ and $%
\left\vert \mathcal{C}\right\vert ^{o}=\dbigcup_{j=1}^{J}\left\vert \mathcal{%
T}_{j}\right\vert ^{o}.$ We refer to $X\backslash \left\vert \mathcal{C}%
\right\vert $ as the \emph{residual} set of $\mathcal{C}$. A\emph{\ level}
of $\mathcal{C}$ (respectively a \emph{column} of $\mathcal{C}$) is a level
(resp. column) of a tower in $\mathcal{C}$. Thus $\dbigcup \left\{ \mathcal{T%
}:\mathcal{T\in C}\right\} $ is the set of all levels of $\mathcal{C}$,
which we denote by $L\left( \mathcal{C}\right) .$

If $\mathcal{T}$ is a tower for $T$ then each finite measurable partition $%
\mathcal{Q}=\left\{ B_{j}\right\} _{j=1}^{J}$ of the base of $\mathcal{T}$
gives rise to a castle $\mathcal{T}_{Q}$ whose towers are the columns of $%
\mathcal{T}$ with bases $B_{j}$. Given a finite partition $\mathcal{P}$ of $%
\left\vert \mathcal{T}\right\vert ,$ we obtain a partition $\mathcal{P}_{%
\mathcal{T}}$ of the base $B$ of $\mathcal{T}$ whose atoms are maximal sets $%
B_{j}$ such that for every $i\in \left\{ 1,2,...,h_{\mathcal{T}}\right\}
,T^{i}B_{j}$ is contained in a single atom of $\mathcal{P}$. That is, $%
\mathcal{P}_{\mathcal{T}}$ is the trace of $\bigvee_{i=0}^{h-1}T^{-i}%
\mathcal{P}$ on $B.$ This partition yields a castle $\left( \mathcal{T}%
\right) _{\mathcal{P}_{\mathcal{T}}}$ as above. We refer to this castle as
the castle of $\mathcal{P-}$columns of $\mathcal{T}$. We make similar
definitions for castles $\mathcal{C}$ and partitions of $\left\vert \mathcal{%
C}\right\vert $ or of the bases of the towers of $\mathcal{C}$. We let $%
\mathcal{P}\left( \mathcal{C}\right) $ denote the partition of $X$ into the
levels of $\mathcal{C}$ and the residual set of $\mathcal{C}.$

Given two castles $\mathcal{C}_{1}$ and $\mathcal{C}_{2}$ for $T,$ we write $%
\mathcal{C}_{1}\leq \mathcal{C}_{2}$ if $\mathcal{C}_{2}$ can be viewed
abstractly as having been obtained from $\mathcal{C}_{1}$ by a cutting and
stacking construction, as in \cite{AOW}$.$ More formally, this means the
following: $\left( i\right) $ $\left\vert \mathcal{C}_{1}\right\vert \subset
\left\vert \mathcal{C}_{2}\right\vert ^{o},$ $\left( ii\right) $ There is a
finite partition $Q$ of the bases of the towers of $\mathcal{C}_{1}$ such
that each level of the castle $\left( \mathcal{C}_{1}\right) _{Q}$ is a
level of $\mathcal{C}_{2},$ and $\left( iii\right) $ for each tower of $%
\left( \mathcal{C}_{1}\right) _{Q}$ there is a tower of $\mathcal{C}_{2}$
that contains it. Note that condition $\left( i\right) $ implies that if $%
\left\{ A_{i}\right\} _{i=1}^{h_{2}}$ is a tower in $\mathcal{C}_{2}$ and $%
A_{j}$ is a base of a tower of $\left( \mathcal{C}_{1}\right) _{Q}$ of
height $h_{1},$ then we must have $j\leq h_{2}-h_{1}.$

We make use of the following lemmas. Lemmas \ref{little push} and \ref{push
forward} are well known, but we include their proofs for the convenience of
the reader.

\begin{lemma}
\label{little push} If $T_{\sigma }$ is an ergodic $G-$extension of $\left(
T,X,\mu \right) $, then given sets $A,B\subset X$ of positive measure, and a
non-empty open set $U\subset G,$ there are a set $A^{\prime }\subset A$ of
positive measure and $n^{\prime }\in \mathbb{N}$ such that $T^{n^{\prime
}}\left( A^{\prime }\right) \subset B$ and for all $x\in A^{\prime },$ $%
\sigma \left( x,n^{\prime }\right) \in U.$
\end{lemma}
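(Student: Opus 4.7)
The plan is to transfer the problem into the product space $(X \times G, \mu \times \lambda)$ and exploit the ergodicity of $T_\sigma$ there. First, using the continuity of the group operations on $G$, I would select a point $u_0 \in U$ together with a symmetric open neighborhood $V$ of $e_G$ having compact closure (hence positive, finite Haar measure) and satisfying $V u_0 V^{-1} \subset U$. Such a $V$ exists because the map $(v,w) \mapsto v u_0 w^{-1}$ is continuous and sends $(e_G, e_G)$ to the interior point $u_0$ of $U$.

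Next, I would form the positive-and-finite-measure sets $E_1 = A \times V$ and $E_2 = B \times V u_0$ in $X \times G$. The ergodicity of $T_\sigma$ with respect to the $\sigma$-finite invariant measure $\mu \times \lambda$ yields an $n' \in \mathbb{N}$ with $(\mu \times \lambda)(E_1 \cap T_\sigma^{-n'} E_2) > 0$. Any point $(x, g)$ in this intersection satisfies $x \in A$, $g \in V$, $T^{n'} x \in B$, and $\sigma(x, n') g \in V u_0$; since $V$ is symmetric, the last condition gives
\[
\sigma(x, n') \in V u_0 g^{-1} \subset V u_0 V^{-1} \subset U.
\]

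Finally, since the conditions $T^{n'} x \in B$ and $\sigma(x, n') \in U$ depend only on $x$, the intersection $E_1 \cap T_\sigma^{-n'} E_2$ is contained in $A' \times V$, where
\[
A' = \{ x \in A : T^{n'} x \in B \text{ and } \sigma(x, n') \in U \}.
\]
Fubini then yields $\mu(A') \lambda(V) > 0$, so $\mu(A') > 0$, and $(A', n')$ is the desired pair. The one delicate step is the initial choice of $u_0$ and $V$: these are engineered precisely so that the product-set recurrence furnished by ergodicity of $T_\sigma$ forces the cocycle value into $U$ itself rather than into a thickening of $U$. After that, the argument is mechanical, amounting to an application of Fubini together with standard recurrence for ergodic $\sigma$-finite measure-preserving systems.
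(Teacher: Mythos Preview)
Your argument is correct and follows essentially the same route as the paper's proof: both transfer the problem to $X\times G$, choose neighborhoods so that the product-set condition forces $\sigma(x,n')\in U$, invoke ergodicity of $T_\sigma$ to obtain a forward iterate with positive-measure intersection, and then read off $A'$ via Fubini. The only cosmetic difference is that the paper fixes a single $g_0\in V_0$ (via Fubini) before extracting $n'$, whereas you apply Fubini once at the end; your parametrization $V,\,Vu_0$ corresponds exactly to the paper's $V_0,\,V_1$ with $V_1V_0^{-1}\subset U$.
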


\begin{proof}
Fix sets $A,B\subset X$ of positive measure, and a non-empty open set $%
U\subset G.$ Choose non-empty open sets $V_{0}$ and $V_{1}$ in $G$ so that $%
e_{G}\in V_{0}$ and $V_{1}V_{0}^{-1}\subset U.$ Since $T_{\sigma }$ is
ergodic, for almost every $\left( x,g\right) \in A\times V_{0},$ there are
(infinitely many) $n\in \mathbb{N}$ such that 
\begin{equation*}
\left( T_{\sigma }\right) ^{n}\left( x,g\right) \in B\times V_{1}.
\end{equation*}%
Hence there exists some $g_{0}\in V_{0}$ such that for almost all $x\in A,$
there exists $n\in \mathbb{N}$ such that%
\begin{equation*}
\left( T_{\sigma }\right) ^{n}\left( x,g_{0}\right) \in B\times V_{1}.
\end{equation*}%
For each $n\in \mathbb{N},$ let $A_{n}=\left\{ x\in A:\left( T_{\sigma
}\right) ^{n}\left( x,g_{0}\right) \in B\times V_{1}\right\} .$ Then for
some $n^{\prime }\in \mathbb{N}$, $\mu \left( A_{n^{\prime }}\right) >0,$
and so for each $x\in A_{n^{\prime }}$ we have%
\begin{equation*}
\sigma \left( x,n^{\prime }\right) g_{0}\in V_{1}
\end{equation*}%
so%
\begin{equation*}
\sigma \left( x,n^{\prime }\right) \in V_{1}g_{0}^{-1}\subset U.
\end{equation*}%
Setting $A^{\prime }=A_{n^{\prime }}$ we have the desired result.
\end{proof}

For $A^{\prime }$ $B$ and $n^{\prime }$ satisfying the conclusions of this
lemma, we say $\left( A^{\prime },n^{\prime }\right) $ is $\left( B,U\right)
-$good, or simply $A^{\prime }$ is $\left( B,U\right) -$good. We strengthen
this lemma to obtain the following lemma.

\begin{lemma}
\label{push forward}If $T_{\sigma }$ is an ergodic $G-$extension of $\left(
T,X,\mu \right) $, then, given $A,B\subset X$ of equal measure and a
non-empty open set $U\subset G,$ there is a measurable function $%
p:A\rightarrow \mathbb{N}$ such that $T^{p}\upharpoonright A$ is an
isomorphism from $A$ to $B$ and, for almost every $x\in A,$ $\sigma \left(
x,p\left( x\right) \right) \in U.$
\end{lemma}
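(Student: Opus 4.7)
The plan is to prove this by an exhaustion argument built on Lemma \ref{little push}. First, I construct inductively a sequence of pairwise disjoint measurable sets $A_1,A_2,\ldots \subset A$ and positive integers $n_1,n_2,\ldots$ such that the sets $T^{n_k}(A_k)$ are pairwise disjoint subsets of $B$ and, for a.e.\ $x\in A_k$, $\sigma(x,n_k)\in U$. Having chosen $A_1,\ldots,A_{k-1}$ and $n_1,\ldots,n_{k-1}$, I set
\[
m_k=\sup\{\mu(A'): A'\subset A\setminus{\textstyle\bigcup_{i<k}}A_i,\ \exists\, n\in\mathbb{N}\text{ with }T^{n}(A')\subset B\setminus{\textstyle\bigcup_{i<k}}T^{n_i}(A_i),\ \sigma(\cdot,n)\in U\text{ a.e.\ on }A'\},
\]
and choose some $A_k$ (with its associated $n_k$) of measure at least $m_k/2$.

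The crucial step is then to show that $\mu(A\setminus\bigcup_k A_k)=0$. Assuming for a contradiction that $R_A:=A\setminus\bigcup_k A_k$ has positive measure, I observe that since each $T^{n_k}\upharpoonright A_k$ is measure-preserving and the images $T^{n_k}(A_k)$ are disjoint,
\[
\mu\Big(\bigcup_k T^{n_k}(A_k)\Big)=\sum_k \mu(A_k)=\mu\Big(\bigcup_k A_k\Big)<\mu(A)=\mu(B),
\]
so $R_B:=B\setminus\bigcup_k T^{n_k}(A_k)$ also has positive measure. Applying Lemma \ref{little push} to the pair $(R_A,R_B)$ and the set $U$ yields some $A''\subset R_A$ of positive measure $\delta$ and an integer $n''\in\mathbb{N}$ with $T^{n''}(A'')\subset R_B$ and $\sigma(\cdot,n'')\in U$ a.e.\ on $A''$. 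Since $A''$ is admissible in the definition of $m_k$ at every stage, $m_k\geq\delta$ and hence $\mu(A_k)\geq\delta/2$ for every $k$, contradicting $\sum_k\mu(A_k)\leq\mu(A)<\infty$.

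Finally, I set $p(x)=n_k$ for $x\in A_k$ (with any value on the null residual set). Then $T^{p}\upharpoonright A$ is measurable, injective because the images $T^{n_k}(A_k)$ are disjoint, and measure-preserving into $B$; since $\mu(A)=\mu(B)$ its image is $B$ modulo null sets, so $T^{p}\upharpoonright A$ is an isomorphism from $A$ to $B$ with $\sigma(x,p(x))\in U$ a.e., as required. The main obstacle is the greedy exhaustion argument: one has to be confident that the supremum $m_k$ is well defined and that the residual set cannot retain positive measure. Both points reduce to the finiteness of $\mu$ together with the room-to-continue guarantee provided by Lemma \ref{little push}.
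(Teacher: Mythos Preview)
Your proof is correct and follows essentially the same greedy exhaustion argument as the paper: both build a disjoint sequence $(A_k,n_k)$ by choosing at each stage a good set of nearly maximal measure, and both derive a contradiction from a putative positive-measure residual via Lemma~\ref{little push}. The only difference is cosmetic---you take $\mu(A_k)\geq m_k/2$ where the paper takes $\mu(A_k)>a_k-\varepsilon_k$ for a summable sequence $\varepsilon_k$---and your version is arguably a touch cleaner in that you apply Lemma~\ref{little push} to the residual pair $(R_A,R_B)$ rather than to $(R_A,B)$.
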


\begin{proof}
Fix $A,B\subset X$ of equal measure and non-empty open $U\subset G.$ Fix $%
\varepsilon _{i}\downarrow 0$. Let 
\begin{equation*}
a_{1}=\sup \left\{ \mu \left( A^{\prime }\right) :A^{\prime }\subset A\text{
and }A^{\prime }\text{ is }\left( B,U\right) -\text{good}\right\} .
\end{equation*}

Choose $A_{1}\subset A$ and $n_{1}\in \mathbb{N}$ such that $\left(
A_{1},n_{1}\right) $ is $\left( B,U\right) -$good and $\mu \left(
A_{1}\right) >a_{1}-\varepsilon _{1}.$ If $\mu \left( A_{1}\right) =\mu
\left( A\right) ,$ we are done. If $\mu \left( A_{1}\right) <\mu \left(
A\right) $, let 
\begin{equation*}
a_{2}=\sup \left\{ \mu \left( A^{\prime }\right) :A^{\prime }\subset
A\backslash A_{1}\text{ and }A^{\prime }\text{ is }\left( B\backslash \left(
T^{n_{1}}A_{1}\right) ,U\right) -\text{good}\right\}
\end{equation*}%
and choose $A_{2}\subset A\backslash A_{1}$ and $n_{2}\in \mathbb{N}$ such
that $\left( A_{2},n_{2}\right) $ is $\left( B\backslash
T^{n_{1}}A_{1},U\right) -$good and $\mu \left( A_{2}\right)
>a_{2}-\varepsilon _{2}.$

Continue in this way to obtain a pairwise disjoint sequence $\left\{
A_{i}\right\} $ and integers $n_{i}\in \mathbb{N}.$ If, for some $k\in 
\mathbb{N},$ $\mu \left( \bigcup_{i=1}^{k}A_{i}\right) =\mu \left( A\right)
, $ we are done. Suppose, then that for all $k,$ $\mu \left(
\bigcup_{i=1}^{k}A_{i}\right) <\mu \left( A\right) .$ If in fact $\mu \left(
\bigcup_{i=1}^{\infty }A_{i}\right) <\mu \left( A\right) ,$ then by Lemma %
\ref{little push} there is a set $A^{\prime }\subset A\backslash \left(
\bigcup_{i=1}^{\infty }A_{i}\right) $ of positive measure and $n^{\prime
}\in \mathbb{N}$ such that $\left( A^{\prime },n^{\prime }\right) $ is $%
\left( B,U\right) -$good$.$ But $\sum_{i=1}^{\infty }\mu \left( A_{i}\right)
<\infty $, so $\mu \left( A_{i}\right) \rightarrow 0,$ so $\mu \left(
A_{i}\right) +\varepsilon _{i}\rightarrow 0,$ so for some $i$%
\begin{equation*}
a_{i}<\mu \left( A_{i}\right) +\varepsilon _{i}<\mu \left( A^{\prime }\right)
\end{equation*}%
which contradicts the choice of $a_{i}.$ Hence $\mu \left(
\bigcup_{i=1}^{\infty }A_{i}\right) =\mu \left( A\right) ,$ and we are done
in this case as well.
\end{proof}

We note that this lemma can easily be strengthened to say that if a
measurable function $p_{1}:A\rightarrow \mathbb{N}$ is given, then the
function $p$ can be chosen so that in addition to the conclusions of the
lemma, $p\left( x\right) >p_{1}\left( x\right) $ almost everywhere.

In fact, we will use the following stronger form:

\begin{lemma}
\label{push forward function}If $T_{\sigma }$ is an ergodic $G-$extension of 
$\left( T,X,\mu \right) $, then given $A$ and $B\subset X$ of equal positive
measure, $g:A\rightarrow G$ measurable, $p_{1}:A\rightarrow \mathbb{N},$
measurable, and a neighborhood $U$ of $e_{G},$ there is a measurable $%
p:A\rightarrow \mathbb{N},$ with $p\left( x\right) >p_{1}\left( x\right) $
almost everywhere, such that $T^{p}:A\rightarrow B$ is an isomorphism, and $%
\sigma \left( x,p\left( x\right) \right) g\left( x\right) ^{-1}\in U$ almost
everywhere.
\end{lemma}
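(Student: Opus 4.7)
The plan is to reduce to Lemma~\ref{push forward} (in the strengthened form noted in the remark following it, which permits a prescribed pointwise lower bound) by decomposing $A$ into countably many Borel pieces on which $g$ is controlled up to multiplication by a small element of $G$.

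First I would use continuity of multiplication and inversion, together with the fact that $U$ is a neighborhood of $e_G$, to choose an open neighborhood $V$ of $e_G$ satisfying $V V^{-1} \subset U$. Second countability of $G$ then lets me cover $G$ by countably many translates $V g_k$, $k \in \mathbb{N}$, and disjointifying yields a countable Borel partition $\{W_k\}$ of $G$ with $W_k \subset V g_k$ for every $k$. I set $A_k = \{x \in A : g(x) \in W_k\}$, which is a Borel partition of $A$, and I choose any Borel partition $\{B_k\}$ of $B$ with $\mu(B_k) = \mu(A_k)$ for every $k$; this is possible since $\mu(A) = \mu(B)$.

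Next, for each $k$ with $\mu(A_k) > 0$, I would apply Lemma~\ref{push forward} (with the extra lower-bound condition from the remark) to the pair $(A_k, B_k)$, using the non-empty open target set $V g_k \subset G$ and the restriction $p_1 \upharpoonright A_k$. This produces a measurable $p_k : A_k \to \mathbb{N}$ such that $T^{p_k} : A_k \to B_k$ is an isomorphism, $\sigma(x, p_k(x)) \in V g_k$ for almost every $x \in A_k$, and $p_k(x) > p_1(x)$ almost everywhere. Gluing, I define $p(x) = p_k(x)$ for $x \in A_k$; this is measurable, satisfies $p > p_1$ a.e., and because the $B_k$ are pairwise disjoint and cover $B$, the map $T^p$ is an isomorphism from $A$ onto $B$. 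For almost every $x \in A_k$ we then have both $\sigma(x, p(x)) \in V g_k$ and $g(x) \in W_k \subset V g_k$, so
\begin{equation*}
\sigma(x, p(x)) g(x)^{-1} \in (V g_k)(V g_k)^{-1} = V V^{-1} \subset U,
\end{equation*}
which is the desired conclusion.

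I do not expect a real obstacle: the work is entirely in setting up the decomposition so that the arbitrary measurable function $g$ is replaced, piece by piece, by something constant up to the small neighborhood $V$. Once $V V^{-1} \subset U$ and the Borel partition $\{W_k\}$ are in place, everything reduces to a countable application of the previous lemma, and the measurability of $p$ follows automatically from the Borel nature of the $A_k$.
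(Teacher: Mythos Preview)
Your argument is correct and follows essentially the same route as the paper: choose $V$ with $VV^{-1}\subset U$, partition $A$ according to which translate $Vg_k$ contains $g(x)$, apply Lemma~\ref{push forward} (with the lower-bound remark) on each piece, and conclude via $(Vg_k)(Vg_k)^{-1}=VV^{-1}\subset U$. The only cosmetic difference is that you pre-partition $B$ into pieces $B_k$ of the correct measures, whereas the paper arranges the images $T^{q_i}A_i$ to be pairwise disjoint in $B$ sequentially; both devices serve the same purpose and neither involves any additional idea.
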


\begin{proof}
Choose a neighborhood $V$ of $e_{G}$ such that $VV^{-1}\subset U.$ Partition 
$A$ into measurable sets $\left\{ A_{i}\right\} _{i=1}^{\infty }$ such that
for each $i,$ there is some $g_{i}\in G$ such that $g\left( A_{i}\right)
\subset Vg_{i}$. Applying Lemma \ref{push forward}, for each $i,$ choose a
measurable function $q_{i}:A_{i}\rightarrow \mathbb{N}$ with $q_{i}>p_{1}$
on $A_{i}$ so that $T^{q_{i}}:A_{i}\rightarrow B$ is an isomorphism and $%
\sigma \left( x,q_{i}\left( x\right) \right) \in Vg_{i},$ almost everywhere
in $A_{i},$ and so that the sets $\left\{ T^{q_{i}}A_{i}\right\} _{i}$ are
pairwise disjoint. Then for each $x\in A_{i},$ we have $\sigma \left(
x,p\left( x\right) \right) \in Vg_{i}$. But $g\left( x\right) \in Vg_{i},$
so $\sigma \left( x,q_{i}\left( x\right) \right) g\left( x\right) ^{-1}\in
\left( Vg_{i}\right) \left( Vg_{i}\right) ^{-1}=VV^{-1}\subset U.$ Letting $%
p=\bigcup_{i=1}^{\infty }p_{i}$ completes the proof$.$
\end{proof}

\begin{lemma}
\label{castles}Let $G$ be a locally compact second countable group, and let $%
T_{\sigma }$ be a $G-$extension of the aperiodic automorphism $\left( T,X,%
\mathcal{B},\mu \right) $.\ Let $\left\{ U_{k}\right\} _{k=1}^{\infty }$ be
a neighborhood base for $G$ at $e_{G}.$ Then there is a sequence $\left\{ 
\mathcal{C}_{k}\right\} _{k=1}^{\infty }$ of castles, where the towers of $%
\mathcal{C}_{k}$ all have height $h_{k},$ such that:

\begin{enumerate}
\item for each $k,$ $\mathcal{C}_{k}\leq \mathcal{C}_{k+1};$

\item $\mu \left( \dbigcup\limits_{k=1}^{\infty }\left\vert \mathcal{C}%
_{k}\right\vert \right) =1;$

\item $\dbigcup\limits_{k=1}^{\infty }L\left( \mathcal{C}_{k}\right) $
generates $\mathcal{B}$;

\item for each tower $\mathcal{T}$ in $\mathcal{C}_{k}$ with base $A,$ and
each pair of levels $T^{i}A$ and $T^{j}A$ in $\mathcal{T}$, where $1\leq
i<j\leq h_{k},$ there is some $g\in G$ so that for all $x\in A$, $\sigma
\left( x,j-i\right) \in U_{k}g.$
\end{enumerate}
\end{lemma}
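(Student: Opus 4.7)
The plan is to construct $\{\mathcal{C}_k\}$ inductively, with each $\mathcal{C}_{k+1}$ obtained by applying Rokhlin's lemma (valid by aperiodicity of $T$) to produce a tall tower for $T$, then refining its base twice so as to encode both the cocycle condition (4) and the cutting-and-stacking relation $\mathcal{C}_k \leq \mathcal{C}_{k+1}$.

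Given $\mathcal{C}_k$ of height $h_k$, I would choose $h_{k+1}$ much larger than $h_k$ and apply Rokhlin's lemma to obtain a tower $\mathcal{R}$ of height $h_{k+1}$ for $T$ whose residual has measure less than some $\eta_{k+1}$ fixed in advance. Its base $B$ would be refined in two stages: (A) by the finite partition $\bigvee_{i=0}^{h_{k+1}-1} T^{-i}\mathcal{P}(\mathcal{C}_k)$ traced onto $B$, so that each level of each resulting subcolumn lies in a single atom of $\mathcal{P}(\mathcal{C}_k)$; and (B) for each of the $\binom{h_{k+1}}{2}$ pairs $1\le i<j\le h_{k+1}$, by the countable partition of $B$ pulled back via $x\mapsto\sigma(x,j-i)$ from a countable covering of $G$ by right translates of $U_{k+1}$ (which exists by second countability of $G$). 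Finitely many of the resulting atoms of greatest measure would be retained as bases of towers in $\mathcal{C}_{k+1}$; the base case $k=1$ proceeds similarly with refinement (A) omitted.

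Property (4) for $U_{k+1}$ then holds by refinement (B). Properties (2) and (3) are arranged by taking $\eta_{k+1}\to 0$, retaining enough columns that $\mu(|\mathcal{C}_{k+1}|)\to 1$, and letting the widths of $\mathcal{C}_k$-towers shrink, so that individual levels separate points and generate $\mathcal{B}$. For (1), the witness partition $Q$ of the bases of $\mathcal{C}_k$-towers required by the cutting-and-stacking definition is induced by recording, for each base point $x\in B_{\mathcal{T}}$, which $\mathcal{C}_{k+1}$-tower $\mathcal{S}(x)$ and which level $j(x)$ contains $x$; refinement (A) forces each level of $\mathcal{C}_{k+1}$ to lie in a single atom of $\mathcal{P}(\mathcal{C}_k)$, from which part (ii) of the definition follows.

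The main obstacle will be simultaneous verification of parts (i) and (iii) of the cutting-and-stacking relation. A point $x\in B_{\mathcal{T}}$ with $j(x)>h_{k+1}-h_k+1$ gives a $\mathcal{C}_k$-column through $x$ extending past the top of $\mathcal{S}(x)$, violating (iii); such bad points form a set of measure at most $(h_k-1)/h_{k+1}+\eta_{k+1}$, which is negligible if $h_{k+1}$ is chosen large enough, but simply absorbing them into the residual of $\mathcal{C}_{k+1}$ would violate (i). The care needed is to arrange the induction so that this bad set is omitted from $|\mathcal{C}_k|$ to begin with, or equivalently to interpret $\leq$ modulo null sets while keeping the cumulative mass of bad sets across steps summable. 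This bookkeeping, carried out in the spirit of the cutting-and-stacking constructions of \cite{AOW}, is the technical heart of the proof.
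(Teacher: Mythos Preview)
Your proposal is on the right track and uses the same basic ingredients as the paper---Rokhlin towers, refinement of the base by the cocycle values, and refinement by the previous castle's level partition---but the organization differs in exactly the place you flag as the obstacle, and the paper's organization is what makes that obstacle disappear.

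The paper does \emph{not} build the $\mathcal{C}_k$ inductively. Instead it fixes in advance a nested sequence of Rokhlin towers $\mathcal{T}_k$ with $|\mathcal{T}_k|\subset|\mathcal{T}_{k+1}|^{o}$ and residual measure $<\varepsilon_k$ with $\sum_k\varepsilon_k<1$, together with a generating sequence of finite partitions $\mathcal{P}_k\uparrow\mathcal{B}$. From each $\mathcal{T}_k$ it forms a preliminary castle $\mathcal{C}_k'$ by (a) discarding a small part of the base where the cocycle values $\sigma(T^ix,j-i)$ fall outside a fixed compact $K_k\subset G$, and (b) partitioning the remaining base according to a finite $U_k$-cover of $K_k$ \emph{and} according to $\mathcal{P}_k\vee\mathcal{P}(\mathcal{C}_{k-1}')$. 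Only after all the $\mathcal{C}_k'$ are built does the paper enforce condition~(1), in one stroke: each level $A'$ of $\mathcal{C}_k'$ is replaced by $A'\cap\bigcap_{j>k}|\mathcal{C}_j'|$. This retroactive trimming is precisely your option ``omit the bad set from $|\mathcal{C}_k|$ to begin with,'' made rigorous by abandoning the inductive viewpoint; summability of the $\varepsilon_k$ guarantees conditions~(2) and~(3) survive the trimming.

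Two smaller points where the paper is tidier than your sketch: using a compact $K_k$ gives a \emph{finite} partition of the base for the cocycle refinement, avoiding your extra ``retain finitely many atoms'' step; and condition~(3) is obtained directly from the external sequence $\mathcal{P}_k$, rather than from an argument about shrinking widths.
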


\begin{proof}
Fix a sequence of finite partitions $\mathcal{P}_{k}\uparrow \mathcal{B}$ on 
$X$ and a sequence $\varepsilon _{k}\downarrow 0$ with $\sum_{k}\varepsilon
_{k}<1.$ Choose a sequence of towers $\mathcal{T}_{k}$ for $T$\ with
residual sets of measure less than $\varepsilon _{k}$ such that for each $k,$
$\left\vert \mathcal{T}_{k}\right\vert \subset \left\vert \mathcal{T}%
_{k+1}\right\vert ^{o}.$ Denote the base of $\mathcal{T}_{k}$ by $B_{k}$ and
its height by $h_{k}.$ Choose compact $K_{1}\subset G$ so that if 
\begin{equation*}
B_{1}^{\prime }=\left\{ x\in B_{1}\mid \left( \forall i,j\in \left\{
0,...,h_{1}-1\right\} \right) \sigma \left( T^{i}x,j-i\right) \in
K_{1}\right\}
\end{equation*}%
then $\mu \left( B_{1}^{\prime }\right) >\left( 1-\varepsilon _{1}\right)
\mu \left( B_{1}\right) .$ Let $\mathcal{T}_{1}^{\prime }$ be the portion of 
$\mathcal{T}_{1}$ over\thinspace $B_{1}^{\prime }.$ That is, $\mathcal{T}%
_{1}^{\prime }=\left\{ T_{1}^{i}B_{1}^{\prime }\right\} _{i=0}^{h_{1}-1}$.
Partition $K_{1}$ into sets $\left\{ K_{1,i}\right\} _{i=1}^{s_{1}}$ so that
for each $i=1,...,s_{1}$ there exists $g_{1,i}\in G$ with $K_{1,i}\subset
U_{1}g_{1,i}$. Let $\mathcal{K}_{1}:G\rightarrow G$ be given by%
\begin{equation*}
\mathcal{K}_{1}\left( g\right) =\left\{ 
\begin{array}{c}
g_{1,i},\text{ if }g\in K_{1,i} \\ 
e_{G}\text{ if }g\in G\backslash K_{1}%
\end{array}%
\right\} .
\end{equation*}%
Let $\mathcal{Q}_{1}$ be the partition of $B_{1}^{\prime }$ according to the
values of $\left\{ \mathcal{K}_{1}\left( \sigma \left( T^{i}x,j-i\right)
\right) \right\} _{i,j=0}^{h_{1}-1}$ and the values $\left\{ \mathcal{P}%
_{1}\left( T^{i}\left( x\right) \right) \right\} _{i=0}^{h_{1}-1}$. ($%
\mathcal{P}_{1}\left( y\right) $ denotes the partition element containing $y$%
). We denote the resulting castle $\left( \mathcal{T}_{1}^{\prime }\right) _{%
\mathcal{Q}_{1}}$ by $\mathcal{C}_{1}^{\prime }.$

Next consider $\mathcal{T}_{2}$ with base $B_{2}$ and height $h_{2}.\ $%
Choose compact $K_{2}\subset G$ so that if 
\begin{equation*}
B_{2}^{\prime }=\left\{ x\in B_{2}\mid \left( \forall i,j\in \left\{
0,...,h_{2}-1\right\} \right) \sigma \left( T^{i}x,j-i\right) \in
K_{2}\right\}
\end{equation*}%
then $\mu \left( B_{2}^{\prime }\right) >\left( 1-\varepsilon _{2}\right)
\mu \left( B_{2}\right) .$ Let $\mathcal{T}_{2}^{\prime }$ be the the
portion of $\mathcal{T}_{2}$ over $B_{2}^{\prime }.$ Partition $K_{2}$ into
sets $\left\{ K_{2,i}\right\} _{i=0}^{s_{2}}$ so that for each $%
i=1,...,s_{2} $ there exists $g_{2,i}\in G$ with $K_{2,i}\subset U_{2}g_{2,i}
$. Define $\mathcal{K}_{2}:G\rightarrow G$ analogously to $\mathcal{K}_{1}.$
Let $\mathcal{P}_{2}^{\prime }=\mathcal{P}_{2}\vee \mathcal{P}\left( 
\mathcal{C}_{1}^{\prime }\right) ,$ and let $\mathcal{Q}_{2}$ be the
partition of $B_{2}^{\prime }$ according to the values of 
\begin{equation*}
\left\{ \mathcal{K}_{2}\left( \sigma \left( T^{i}x,j-i\right) \right)
\right\} _{i,j=0}^{h_{2}-1}\vee \left\{ \mathcal{P}_{2}^{\prime }\left(
T^{i}\left( x\right) \right) \right\} _{i=0}^{h_{2}-1}.
\end{equation*}
This gives a castle $\mathcal{C}_{2}^{\prime }=\left( \mathcal{T}%
_{2}^{\prime }\right) _{\mathcal{Q}_{2}}.$

Repeating this process produces a sequence of castles $\mathcal{C}%
_{k}^{\prime }$. To obtain condition 1, we restrict each $\mathcal{C}%
_{k}^{\prime }$ to the set $\bigcap_{j=k+1}^{\infty }\left\vert \mathcal{C}%
_{j}^{\prime }\right\vert .$ That is, for each $k$ and each level $A^{\prime
}$ of $\mathcal{C}_{k}^{\prime },$ we replace $A^{\prime }$ by the set $%
A=A^{\prime }\cap \left( \bigcap_{j=k+1}^{\infty }\left\vert \mathcal{C}%
_{j}^{\prime }\right\vert \right) .$ The resulting set of levels is a castle 
$\mathcal{C}_{k}$, and these castles satisfy the conclusions of the
lemma.\bigskip
\end{proof}

\section{The main result}

We now give the proof of Theorem \ref{gp ext unif}.\bigskip

\begin{proof}
Suppose that $T_{\sigma }$ is an ergodic $G-$extension of $\left( T,X,\mu
\right) $ and $T_{\sigma ^{\prime }}^{\prime }$ is a $G-$ extension of the
aperiodic $\left( T^{\prime },X^{\prime },\mu ^{\prime }\right) $. Fix a
neighborhood $U$ of $e_{G},$ which we may assume to be compact. We will
obtain the desired relative speedup of $T_{\sigma }$ and the $G-$isomorphism
from it to $T_{\sigma }^{\prime }$ as limits of a sequence of partially
defined speedups and isomorphisms.

Let $\delta $ be a complete, right-invariant metric on $G$ compatible with
the topology of $G$. (We note that, while such a metric must exist, there
need not be a complete, two-sided invariant metric compatible with the
topology. See \cite{B}.) Fix $\varepsilon >0$ so that $\bar{B}\left(
\varepsilon ,e_{G}\right) ,$ the closed $\delta -$ball of radius $%
\varepsilon $ centered at $e_{G},$ is compact and contained in $U.$ Fix a
sequence $\varepsilon _{k}\downarrow 0$ with $\sum_{k=1}^{\infty
}\varepsilon _{k}<\frac{\varepsilon }{3}.$ For each $k$ choose a compact
neighborhood $U_{k}$ of $e_{G}$ so that $U_{k}U_{k}^{-1}\subset B\left(
\varepsilon _{k},e_{G}\right) .$ Choose a sequence of castles $\left\{ 
\mathcal{C}_{k}^{\prime }\right\} _{k=1}^{\infty }$ for $T_{\sigma ^{\prime
}}^{\prime }$ as in Lemma \ref{castles} with respect to these $U_{k}.$
Denote the towers and levels of these castles by $\mathcal{C}_{k}^{\prime
}=\left\{ \mathcal{T}_{k,j}^{\prime }\right\} _{j}$ and $\mathcal{T}%
_{k,j}^{\prime }=\left\{ A_{k,j,i}^{\prime }\right\} _{i}.$ In particular,
Lemma \ref{castles} gives us, for all $i\in \left\{ 1,2,...,h_{k}-1\right\} $
and for all levels $A_{k,j,i}^{\prime }$ in $\mathcal{C}_{k}^{\prime },$ an
element $g_{k,j,i}\in G$ so that for all $x^{\prime }\in A_{k,j,1}^{\prime }$%
, $\sigma ^{\prime }\left( x^{\prime },i\right) \in U_{k}g_{k,j,i}.$

Make a copy $\mathcal{C}_{1}$ of $\mathcal{C}_{1}^{\prime }$ in $X.$ That
is, choose pairwise-disjoint sets $A_{1,j,i}\in \mathcal{B}$ corresponding
to the levels of $\mathcal{C}_{1}^{\prime }$ such that, for each $j$ and $i,$
$\mu \left( A_{1,j,i}\right) =\mu ^{\prime }\left( A_{1,j,i}^{\prime
}\right) $. Fix $j$ and an isomorphism $\phi _{1,j}:A_{1,j,1}\rightarrow
A_{1,j,1}^{\prime }.$

Applying Lemma \ref{push forward function} repeatedly, we obtain functions $%
q_{i}:A_{1,j,1}\rightarrow \mathbb{N}$ \ with $q_{i}>q_{i-1}$ so that $%
T^{q_{i}}:A_{1}\rightarrow A_{i}$ isomorphically, and for almost every $x\in
A_{1}$ and every $i,$ 
\begin{equation}
\sigma \left( x,q_{i}\left( x\right) \right) \left( \sigma ^{\prime }\left(
\phi _{1,j}\left( x\right) ,i\right) \right) ^{-1}\in B\left( \varepsilon
_{1},e_{G}\right) .  \label{match 1}
\end{equation}%
For each $i\in \left[ 1,h_{1}-1\right] ,$ let $p_{i}:A_{i}\rightarrow 
\mathbb{N}$ be given by setting 
\begin{equation*}
p_{i}\left( x\right) =q_{i+1}\left( T^{-q_{i}}\left( x\right) \right)
-q_{i}\left( T^{-q_{i}}\left( x\right) \right) .
\end{equation*}%
Then, letting $p=\bigcup_{i=1}^{h_{1}-1}p_{i},$ we obtain a partially
defined speedup $T_{1}:=T^{p}$ of $T,$ defined on $\left\vert \mathcal{T}%
_{1,j}\right\vert ^{o},$ for which $\mathcal{T}_{1,j}$ is a tower. This
construction also yields a partially defined cocyle $\sigma _{1}$ for $%
T_{1}, $ which is defined at $\left( x,n\right) $ whenever $x\in \left\vert 
\mathcal{T}_{1,j}\right\vert \cap T_{1}^{-n}\left\vert \mathcal{T}%
_{1,j}\right\vert .$

Extend $\phi _{1,j}$ to $\left\vert \mathcal{T}_{1,j}\right\vert $ so that
on $\left\vert \mathcal{T}_{1,j}\right\vert ^{o}$%
\begin{equation*}
\phi _{1,j}T_{1}\left( x\right) =T^{\prime }\phi _{1,j}\left( x\right) \text{
a.e.}
\end{equation*}%
In particular, for each $i,$ $\phi _{1,j}\left( A_{1,j,i}\right)
=A_{1,j,i}^{\prime }.$ Define $\alpha _{1,j}:\left\vert \mathcal{T}%
_{1,j}\right\vert \rightarrow G$ by setting, for each $x\in A_{1,j,i},$ 
\begin{equation*}
\alpha _{1,j}\left( x\right) =\sigma ^{\prime }\left( \phi _{1,j}\left(
x\right) ,-i\right) ^{-1}\sigma _{1}\left( x,-i\right) .
\end{equation*}

Repeating this construction on each tower of $\mathcal{C}_{1}$, we set $\phi
_{1}=\bigcup_{j}\phi _{1,j}$ to obtain an isomorphism from $\left\vert 
\mathcal{C}_{1}\right\vert $ to $\left\vert \mathcal{C}_{1}^{\prime
}\right\vert $ intertwining $T_{1}$ and $T^{\prime }.$ Similarly, we let $%
\alpha _{1}=\bigcup_{j}\alpha _{1,j}$ and extend $\alpha _{1}$ to $X$ by
setting $\mathcal{\alpha }_{1}\left( x\right) =e_{G}$ for $x\in X\backslash
\left\vert \mathcal{C}_{1}\right\vert .$ We then see that the map $\left(
x,g\right) \mapsto \left( \phi _{1}\left( x\right) ,\alpha _{1}\left(
x\right) g\right) $ is a $G-$isomorphism from $\left( T_{1}\right) _{\sigma
_{1}}$ to $T_{\sigma ^{\prime }}^{\prime },$ insofar as these maps are
defined, which is to say on $\left\vert \mathcal{C}_{1}\right\vert ^{o}.$ In
other words, for all $\left( x,n\right) $ in the domain of $\sigma _{1},$ 
\begin{equation}
\sigma _{1}^{\alpha _{1}}\left( x,n\right) :=\alpha _{1}\left( \left(
T_{1}\right) ^{n}x\right) \sigma _{1}\left( x,n\right) \alpha _{1}\left(
x\right) ^{-1}=\sigma ^{\prime }\left( \phi _{1}\left( x\right) ,n\right) .
\label{cobdy 1}
\end{equation}

We also see that, because of condition $\left( \text{\ref{match 1}}\right) $
and the right-invariance of $\delta ,$ we have for all $x\in X,$ 
\begin{equation}
\delta \left( \alpha _{1}\left( x\right) ,\varepsilon _{G}\right)
<\varepsilon _{1}.
\end{equation}

(We note that in the above construction the approximate constancy of $\sigma
^{\prime }$ on the levels of $\mathcal{C}_{1}^{\prime }$ was not used.)

Now we show how to iterate this construction to complete the proof of the
theorem. Fix an increasing sequence of finite partitions $\left\{ \mathcal{P}%
_{k}\right\} _{k=1}^{\infty }$ of $X$ that generate $\mathcal{B}.$ Choose $%
n_{2}$ so that the partition $\phi _{1}\left( \mathcal{P}_{1}\right) $ is
approximated to within $\frac{1}{2}$ (in the partition metric) by the levels
of $\mathcal{C}_{n_{2}}^{\prime }.$ The index $n_{2}$ must also be chosen so
that $\varepsilon _{n_{2}}$ is small enough to meet an additional condition,
which we will describe at the end of the proof. For notational convenience
re-index $\mathcal{C}_{n_{2}}^{\prime }$ and refer to it as $\mathcal{C}%
_{2}^{\prime },$ and do the same with $\varepsilon _{n_{2}},$ $U_{n_{2}},$
and so on. Let $\mathcal{C}_{2}$ denote a copy of $\mathcal{C}_{2}^{\prime }$
which is the image of $\mathcal{C}_{2}^{\prime }$ under $\phi _{1}^{-1}.$
That is, for each level $A_{2,j,i}^{\prime }$ of $\mathcal{C}_{2}^{\prime }$
contained in $\left\vert \mathcal{C}_{1}^{\prime }\right\vert ,$ the
corresponding level $A_{2,j,i}$ of $\mathcal{C}_{2}$ is given by $%
A_{2,j,i}=\phi _{1}^{-1}\left( A_{2,j,i}^{\prime }\right) .$ Additional
subsets of $X$ are chosen to serve as $A_{2,j,i}$ when $A_{2,j,i}^{\prime }$
is not contained in $\left\vert \mathcal{C}_{1}^{\prime }\right\vert .$

Our goal is to extend $T_{1}$ to a transformation $T_{2}$ on $\left\vert 
\mathcal{C}_{2}\right\vert ^{o}$, so that $T_{2}$ is again a partially
defined speedup of $T,$ with an associated cocycle $\sigma _{2}.$ We will
also modify $\alpha _{1}$ to a function $\alpha _{2}:X\rightarrow G$ so that
on $\left\vert \mathcal{C}_{2}\right\vert ,$ $\alpha _{2}$ serves as a
transfer function for a $G-$isomorphism between $\left( T_{2}\right)
_{\sigma _{2}}$ and $T_{\sigma ^{\prime }}^{\prime }.$

Note that since $U_{2}$ is compact, and there are only finitely many towers
in $\mathcal{C}_{2}^{\prime },$ there is a compact set $K$ so that for all $%
\left( x^{\prime },n\right) $ with $x^{\prime }\in \left\vert \mathcal{C}%
_{2}^{\prime }\right\vert \cap T^{\prime -n}\left\vert \mathcal{C}%
_{2}^{\prime }\right\vert $, $\sigma ^{\prime }\left( x^{\prime },n\right)
\in K.$ Choose $\xi _{2}\in \left( 0,\varepsilon _{2}\right) $ so that if $%
a,b\in K,$ and $\delta \left( a,a^{\prime }\right) <\xi _{2}$, then $\delta
\left( ba,ba^{\prime }\right) <\varepsilon _{2}.$ (This is possible by
invoking the uniform continuity of the group multiplication on $WK,$ where $%
W $ is a compact neighborhood of $e_{G}.)$

Fix a tower $\mathcal{T}_{2,j}$ in $\mathcal{C}_{2}$ and suppose that $%
\left\vert \mathcal{T}_{2,j}\right\vert \cap \left\vert \mathcal{C}%
_{1}\right\vert \neq \emptyset .$ Let $\phi _{2}:A_{2,j,1}\rightarrow
A_{2,j,1}^{\prime }$ be an isomorphism. For each level $A_{2,j,m}\subset
\left\vert \mathcal{T}_{2,j}\right\vert $ define a function $%
q_{m}:A_{2,j,1}\rightarrow \mathbb{N}$ so that $T^{q_{m}}:A_{2,j,1}%
\rightarrow A_{2,j,m}$ isomorphically, and for almost all $x\in A_{2,j,1},$ 
\begin{equation*}
\sigma ^{\prime }\left( \phi _{2,j}\left( x\right) ,m\right) \sigma \left(
x,q_{m}\left( x\right) \right) ^{-1}\in B\left( \xi _{2},e_{G}\right) .
\end{equation*}%
The $q_{m}$ are chosen so that $q_{m+1}>q_{m}$ and so that, as before,
defining $p_{2}$ on each $A_{2,j,m}$ by 
\begin{equation*}
p_{2}\left( x\right) =q_{m+1}\left( T^{-q_{m}}\left( x\right) \right)
-q_{m}\left( T^{-q_{m}}\left( x\right) \right)
\end{equation*}%
the transformation $T_{2}\left( x\right) =T^{p_{2}}\left( x\right) $ is a
speedup of $T$ and agrees with $T_{1}$ on its domain. This can be done by
repeated application of Lemma \ref{push forward function}$.$ Explicitly, if $%
A_{2,j,1}\not\subset \left\vert \mathcal{C}_{1}\right\vert ,$ then by Lemma %
\ref{push forward function} there is a function $q_{1}:A_{2,j,1}\rightarrow 
\mathbb{N}$ so that $T^{q_{1}}:A_{2,j,1}\rightarrow A_{2,j,2}$
isomorphically, and for almost all $x\in A_{2,j,1}$%
\begin{equation*}
\sigma ^{\prime }\left( \phi _{2,j}\left( x\right) ,1\right) \sigma \left(
x,q_{1}\left( x\right) \right) ^{-1}\in B\left( \xi _{2},e_{G}\right) .
\end{equation*}%
If $A_{2,j,2}$ $\not\subset \left\vert \mathcal{C}_{1}\right\vert ,$ then we
choose $q_{2}>q_{1}$ on $A_{2,j,1}$ so that $T^{q_{2}}:A_{2,j,1}\rightarrow
A_{2,j,3}$ isomorphically, and for almost all $x\in A_{2,j,1}$%
\begin{equation*}
\sigma ^{\prime }\left( \phi _{2,j}\left( x\right) ,2\right) \sigma \left(
x,q_{2}\left( x\right) \right) ^{-1}\in B\left( \xi _{2},e_{G}\right) .
\end{equation*}%
We continue in this way until (unless) we first arrive at a level $%
A_{2,j,m}\subset \left\vert \mathcal{C}_{1}\right\vert .$ There $T_{1}$ is
already defined, and we let $q_{m+1}=q_{m}+p_{1}.$ We continue this way
until we reach the top level $A_{2,j,m+h_{1}-1}$ of this $\mathcal{C}_{1}-$%
column. If there is another level $A_{2,j,m+h_{1}}$ of $\left\vert \mathcal{T%
}_{2,j}\right\vert ,$ we define $q_{m+h_{1}}$ as before and continue until
all levels of $\left\vert \mathcal{T}_{2,j}\right\vert $ have been addressed.

Having defined $T_{2}$ on $\left\vert \mathcal{T}_{2,j}\right\vert ^{o},$ $%
\phi _{2,j}$ is then extendible uniquely to $\left\vert \mathcal{T}%
_{2,j}\right\vert $ by the requirement that for almost all $x\in \left\vert 
\mathcal{T}_{2,j}\right\vert ^{o},$ 
\begin{equation*}
\phi _{2,j}\left( T_{2}x\right) =T^{\prime }\left( \phi _{2,j}x\right) .
\end{equation*}

Let $\sigma _{2}$ denote the cocyle determined by $T_{2}$ and $\sigma ,$
which is defined for pairs $\left( x,n\right) ,$ where $x\in \left\vert 
\mathcal{T}_{2,j}\right\vert \cap T_{2}^{-n}\left\vert \mathcal{T}%
_{2,j}\right\vert .$ We define $\alpha _{2}:\left\vert \mathcal{T}%
_{2,j}\right\vert \rightarrow G$ in two stages. First, for $x\in
A_{2,j,m}\subset \left\vert \mathcal{T}_{2,j}\right\vert \cap \left\vert 
\mathcal{C}_{1}\right\vert $, if $x\in A_{1,j,l+1}$ (that is, $x$ is in the $%
\left( l+1\right) ^{st}$ level of $\mathcal{C}_{1}$), then we let 
\begin{equation*}
\bar{\alpha}_{1}\left( x\right) =\sigma ^{\prime }\left( \phi _{2,j}\left(
x\right) ,-l\right) ^{-1}\sigma _{2}^{a_{1}}\left( x,-l\right) .
\end{equation*}%
We set $\bar{\alpha}_{1}\left( x\right) =e_{G}$ on $\left\vert \mathcal{T}%
_{2,j}\right\vert \backslash \left\vert \mathcal{C}_{1}\right\vert $. We see
that for $x\in \left\vert \mathcal{T}_{2,j}\right\vert \cap \left\vert 
\mathcal{C}_{1}\right\vert ,$%
\begin{eqnarray*}
\delta \left( \bar{\alpha}_{1}\left( x\right) ,e_{G}\right) &=&\delta \left(
\sigma ^{\prime }\left( \phi _{2,j}\left( x\right) ,-l\right) ^{-1},\sigma
_{2}^{a_{1}}\left( x,-l\right) ^{-1}\right) \\
&\leq &\delta \left( \sigma ^{\prime }\left( \phi _{2,j}\left( x\right)
,-l\right) ^{-1},\sigma ^{\prime }\left( \phi _{1}\left( x\right) ,-l\right)
\right) \\
&&+\delta \left( \sigma ^{\prime }\left( \phi _{1}\left( x\right) ,-l\right)
,\sigma _{2}^{a_{1}}\left( x,-l\right) ^{-1}\right) \\
&=&\delta \left( \sigma ^{\prime }\left( \phi _{2,j}\left( x\right)
,-l\right) ^{-1},\sigma ^{\prime }\left( \phi _{1}\left( x\right) ,-l\right)
\right) \\
&\leq &2\varepsilon _{2}
\end{eqnarray*}%
where the last inequality follows from the condition on the near constancy
of $\sigma ,$ on the columns of $\mathcal{C}_{2}^{\prime }.$ Thus $\delta
\left( \bar{\alpha}_{1}\left( x\right) \alpha \left( x\right) ,\alpha \left(
x\right) \right) =\delta \left( \bar{\alpha}_{1}\left( x\right)
,e_{G}\right) \leq 2\varepsilon _{2}.$ Moreover, on a $\mathcal{C}_{1}-$%
column contained in $\mathcal{T}_{2,j},$ the map $\left( x,g\right) \mapsto
\left( \phi _{2,j}\left( x\right) ,\bar{\alpha}_{1}\left( x\right) \alpha
_{1}\left( x\right) g\right) $ is a $G-$isomorphism from $\left(
T_{2}\right) _{\sigma _{2}}$ to $T_{\sigma ^{\prime }}^{\prime }.$

Now for any $x\in A_{2,j,m+1}\subset \left\vert \mathcal{T}_{2,j}\right\vert 
$ let 
\begin{equation*}
\tilde{\alpha}_{1}\left( x\right) =\sigma ^{\prime }\left( \phi _{2,j}\left(
x\right) ,-m\right) ^{-1}\sigma _{2}^{\bar{\alpha}_{1}\alpha _{1}}\left(
x,-m\right) .
\end{equation*}%
We see that $\delta \left( \tilde{\alpha}_{1}\left( x\right) ,e_{G}\right)
<\varepsilon _{2}.$ Indeed, if $x\in \left\vert \mathcal{T}_{2,j}\right\vert
\backslash \left\vert \mathcal{C}_{1}\right\vert ,$ or if $x$ is in $%
\left\vert \mathcal{T}_{2,j}\right\vert $ and in the base of $\mathcal{C}%
_{1},$ then this is immediate from the construction$.$ On the other hand,
suppose $x\in \left\vert \mathcal{C}_{1}\right\vert $ but not in the base of 
$\mathcal{C}_{1}.$ Say $x$ is in $A_{2,j,m+1}$ and in the $\left( l+1\right)
^{st}$ level of $\mathcal{C}_{1}.$ Then we have 
\begin{equation*}
\delta \left( \tilde{\alpha}_{1}\left( x\right) ,e_{G}\right) =\delta \left(
\sigma ^{\prime }\left( \phi _{2,j}\left( x\right) ,-m\right) ^{-1},\sigma
_{2}^{\bar{\alpha}_{1}\alpha _{1}}\left( x,-m\right) ^{-1}\right) .
\end{equation*}%
But%
\begin{equation*}
\sigma ^{\prime }\left( \phi _{2,j}\left( x\right) ,-m\right) ^{-1}=\sigma
^{\prime }\left( \phi _{2,j}\left( x\right) ,-l\right) ^{-1}\sigma ^{\prime
}\left( T^{\prime -l}\left( \phi _{2,j}\left( x\right) \right) ,-m\right)
^{-1}
\end{equation*}%
and 
\begin{equation*}
\sigma _{2}^{\bar{\alpha}_{1}\alpha _{1}}\left( x,-m\right) ^{-1}=\sigma
_{2}^{\bar{\alpha}_{1}\alpha _{1}}\left( x,-l\right) ^{-1}\sigma _{2}^{\bar{%
\alpha}_{1}\alpha _{1}}\left( T_{2}^{-l}\left( x\right) ,-m\right) ^{-1}.
\end{equation*}%
Furthermore, 
\begin{equation*}
\sigma ^{\prime }\left( \phi _{2,j}\left( x\right) ,-l\right) =\sigma _{2}^{%
\bar{\alpha}_{1}\alpha _{1}}\left( x,-l\right)
\end{equation*}%
and 
\begin{equation*}
\delta \left( \sigma ^{\prime }\left( T^{\prime -l}\left( \phi _{2,j}\left(
x\right) \right) ,-m\right) ^{-1},\sigma _{2}^{\bar{\alpha}_{1}\alpha
_{1}}\left( T_{2}^{-l}\left( x\right) ,-m\right) ^{-1}\right) <\xi _{2}
\end{equation*}%
so by the choice of $\xi _{2}$ we conclude that 
\begin{equation*}
\delta \left( \tilde{\alpha}_{1}\left( x\right) ,e_{G}\right) \leq
\varepsilon _{2}.
\end{equation*}

Now set $\alpha _{2}\left( x\right) =\tilde{\alpha}_{1}\left( x\right) \bar{%
\alpha}_{1}\left( x\right) \alpha _{1}\left( x\right) $ and observe that 
\begin{equation*}
\delta \left( \alpha _{2}\left( x\right) ,\alpha _{1}\left( x\right) \right)
\leq 3\varepsilon _{2}
\end{equation*}%
(using the right invariance of $\delta $). The map $\left( x,g\right)
\mapsto \left( \phi _{2,j}\left( x\right) ,\alpha _{2}\left( x\right)
g\right) $ is a $G-$ isomorphism from $\left( T_{2}\right) _{\sigma _{2}}$
to $T_{\sigma ^{\prime }}^{\prime }$ on all of $\left\vert \mathcal{T}%
_{2,j}\right\vert .$

Perform this construction on each tower of $\mathcal{C}_{2}$ which meets $%
\left\vert \mathcal{C}_{1}\right\vert .$ If $\mathcal{T}_{2,j}$ is a tower
of $\mathcal{C}_{2}$ that does not meet $\left\vert \mathcal{C}%
_{1}\right\vert ,$ then employ the simpler construction that was used in the
first stage of the proof to define $T_{2}$ and $\alpha _{2}$ on such a
tower. Setting $\alpha _{2}\left( x\right) =e_{G}$ on $X\backslash
\left\vert \mathcal{C}_{2}\right\vert $ completes the second stage of the
proof.

This procedure can be repeated indefinitely to produce a sequence of castles 
$\mathcal{C}_{k}$ in $X$ for partially defined transformations $T_{k}$,
where the levels of $\mathcal{C}_{k}$ approximate the partition $\mathcal{P}%
_{k-1}$ to within $\frac{1}{k},$ so that each $T_{k}$ is a speedup of $T$
defined on $\left\vert \mathcal{C}_{k}\right\vert ^{o},$ each $T_{k+1}$
extends $T_{k},$ and the transformation $\bar{T}=\bigcup_{k}T_{k}$ is a
speedup of $T$ defined almost everywhere. Let $\bar{\sigma}$ denote the
cocycle for $\bar{T}$ that arises from $\sigma $.

The construction also produces a sequence of isomorphisms $\phi
_{k}:\left\vert \mathcal{C}_{k}\right\vert \rightarrow \left\vert \mathcal{C}%
_{k}^{\prime }\right\vert $ that intertwine $T_{k}$ and $T^{\prime }.$ In
addition, it produces a sequence of functions $\alpha _{k}:X\rightarrow G$
so that for each $k,$ $\delta \left( \alpha _{k+1},\alpha _{k}\right) \leq
3\varepsilon _{k+1}$ and so that the map $\left( x,g\right) \mapsto \left(
\phi _{k}x,\alpha _{k}g\right) $ is a $G-$isomorphism between $\left(
T_{k}\right) _{\sigma _{k}}$ and $T_{\sigma ^{\prime }}^{\prime }$ on $%
\left\vert \mathcal{C}_{k}\right\vert .$ Since $\delta $ is complete we see
that the sequence $\alpha _{k}$ converges uniformly to a function $\alpha ,$
such that for almost all $x,$ $\delta \left( \alpha \left( x\right)
,e_{g}\right) \leq \varepsilon ,$ and hence $\alpha \left( x\right) \in U.$

In the contruction of the $\phi _{k}$ we observe that each $\phi _{k+1}$
agrees set-wise with $\phi _{k}$ on the levels of $\mathcal{C}_{k}.$ Since
the $\sigma -$algebras $\mathcal{B}_{k}$ generated by the levels of $%
\mathcal{C}_{k}$ increase to the full $\sigma -$algebra, the maps $\phi _{k}$
determine an isomorphism $\phi $ between $\bar{T}$ and $T^{\prime }$ which,
for each $k,$ agrees set-wise with $\phi _{k}$ on $\mathcal{B}_{k}.$

Now we confirm that the map $\left( x,g\right) \mapsto \left( \phi x,\alpha
\left( x\right) g\right) $ is a $G-$isomorphism from $\bar{T}_{\bar{\sigma}}$
to $T_{\sigma ^{\prime }}^{\prime }.$ We want to establish that for each $n,$
\begin{eqnarray*}
\sigma ^{\prime }\left( \phi x,n\right) &=&\alpha \left( \bar{T}^{n}x\right) 
\bar{\sigma}\left( x,n\right) \alpha \left( x\right) ^{-1}\text{ a.e.} \\
&=&\bar{\sigma}^{\alpha }\left( x,n\right) .
\end{eqnarray*}%
Fix $n\in \mathbb{Z}$ and $\eta >0.$ For almost every $x,$ if $k$ is
sufficiently large, then 
\begin{equation*}
\bar{T}^{n}x=T_{k}^{n}x,
\end{equation*}%
\begin{equation*}
\bar{\sigma}\left( x,n\right) =\sigma _{k}\left( x,n\right)
\end{equation*}%
and 
\begin{equation*}
\delta \left( \alpha \left( x\right) ,\alpha _{k}\left( x\right) \right)
\leq \eta .
\end{equation*}%
Furthermore, since the points $\phi \left( x\right) $ and $\phi _{k}\left(
x\right) $ are in the same level of $\mathcal{C}_{k}^{\prime },$ the
approximate constancy of $\sigma ^{\prime }\left( \cdot ,n\right) $ on such
a level gives 
\begin{equation*}
\delta \left( \sigma ^{\prime }\left( \phi x,n\right) ,\sigma ^{\prime
}\left( \phi _{k}x,n\right) \right) \leq \eta .
\end{equation*}%
But we know that 
\begin{eqnarray*}
\sigma ^{\prime }\left( \phi _{k}x,n\right) &=&\alpha _{k}\left(
T_{k}^{n}x\right) \sigma _{k}\left( x,n\right) \alpha _{k}\left( x\right)
^{-1}\text{ a.e.} \\
&=&\alpha _{k}\left( \bar{T}^{n}x\right) \bar{\sigma}\left( x,n\right)
\alpha _{k}\left( x\right) ^{-1}.
\end{eqnarray*}%
We only need to conclude that this value is close to $\bar{\sigma}^{\alpha
}\left( x,n\right) .$

Now we describe the additional condition according to which the subsequence $%
\varepsilon _{n_{2}},$ $\varepsilon _{n_{3}},...$ (relabeled $\varepsilon
_{2},\varepsilon _{3},...$) must be chosen. For each $k\geq 1,$ there is a
fixed compact set $K$ containing all the values of $\alpha _{k}$ and all the
values of $\sigma _{k}\left( x,t\right) ,$ for $x\in \left\vert \mathcal{C}%
_{k}\right\vert \cap T_{k}^{-t}\left\vert \mathcal{C}_{k}\right\vert .$
(Recall that all the values of $\alpha _{k}$ lie in $\bar{B}\left(
\varepsilon ,e_{G}\right) ,$ which is compact.) Regardless of how the $%
\varepsilon _{k}$ will be chosen, we will have, for all $k$ and $x,$ 
\begin{equation*}
\delta \left( \alpha _{k}\left( x\right) ,\alpha \left( x\right) \right)
<\sum_{m=k+1}^{\infty }\varepsilon _{m}.
\end{equation*}%
The additional condition we impose on the $\varepsilon _{k}$ is that this
sum is so small as to ensure that for all $x\in \left\vert \mathcal{C}%
_{k}\right\vert \cap T_{k}^{-t}\left\vert \mathcal{C}_{k}\right\vert $, 
\begin{equation*}
\mathcal{\delta }\left( \alpha _{k}\left( T_{k}^{t}x\right) \sigma
_{k}\left( x,t\right) \alpha _{k}\left( x\right) ^{-1},\alpha \left( \bar{T}%
^{t}x\right) \sigma _{k}\left( x,t\right) \alpha \left( x\right)
^{-1}\right) <\frac{1}{k}.
\end{equation*}%
If this is done, then for the given $n$ and $\eta ,$ arguing with
sufficently large $k$, we can conclude that 
\begin{equation*}
\delta \left( \sigma ^{\prime }\left( \phi x,n\right) ,\alpha \left( \bar{T}%
^{n}x\right) \bar{\sigma}\left( x,n\right) \alpha \left( x\right)
^{-1}\right) <\eta +\frac{1}{k}.
\end{equation*}%
Since $\eta $ is arbitrary, we have the desired equality.\bigskip
\end{proof}

We note that in the case that $G$ is a discrete group, we immediately obtain
the following stronger result:

\begin{corollary}
\label{discrete gp ext}Let $T_{\sigma }$ and $T_{\sigma ^{\prime }}^{\prime
} $ be $G-$extensions of $T$ and $T^{\prime },$ where $G$ is a finite or
countable group. Suppose that $T_{\sigma }$ is ergodic and $T^{\prime }$ is
aperiodic. Then there is a relative speedup of $T_{\sigma }$ which is $G-$%
isomorphic to $T_{\sigma ^{\prime }}^{\prime }$ by a relative isomorphism
whose transfer function $\alpha $ satisfies $\alpha \left( x\right) =e_{G}$
almost everywhere.\bigskip
\end{corollary}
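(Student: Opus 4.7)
The plan is to deduce this corollary as an immediate specialization of Theorem \ref{gp ext unif}. The key observation is that when $G$ is finite or countable, we equip $G$ with its discrete topology, under which $G$ is automatically a locally compact second countable group. In that topology the singleton $\{e_G\}$ is itself an open neighborhood of $e_G$.

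Thus I would simply apply Theorem \ref{gp ext unif} to $T_\sigma$ and $T'_{\sigma'}$ with the particular choice $U = \{e_G\}$. The hypotheses are met: $T_\sigma$ is ergodic and $T'$ is aperiodic by assumption. The theorem then produces a relative speedup of $T_\sigma$ that is $G$-isomorphic to $T'_{\sigma'}$ via a transfer function $\alpha : X \to G$ satisfying $\alpha(x) \in U = \{e_G\}$ almost everywhere, which is precisely to say $\alpha(x) = e_G$ a.e.

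There is no real obstacle here; the only subtlety worth remarking on is that in the discrete setting one does not need the metric approximation machinery from the proof of Theorem \ref{gp ext unif} at all, because the constructed $\alpha_k$ in that proof take values in $\{e_G\}$ from the start (the compact neighborhoods $U_k$ in the statement of Lemma \ref{castles} can all be taken equal to $\{e_G\}$). So the entire iterative matching procedure collapses to a single-stage construction in which the cocycle $\sigma'$ is matched on the nose rather than approximately.
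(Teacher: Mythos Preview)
Your main argument is correct and is exactly what the paper intends: since a countable discrete group is locally compact second countable and $\{e_G\}$ is open, Theorem~\ref{gp ext unif} applied with $U=\{e_G\}$ gives the corollary immediately. One small correction to your closing remark: the castle iteration in the proof of Theorem~\ref{gp ext unif} is still needed in the discrete case to define the speedup on a full-measure set; what simplifies is only that each $\alpha_k$ equals $e_G$ exactly, so no convergence argument for the transfer function is required.
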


\section{Finite and countable extensions}

We now turn to the analysis of speedups of $n-$point extensions. First we
introduce a simplification of some of our notation. Given an automorphism $T$
and a cocycle $\sigma $ taking values in a group $G,$ we will denote the $G-$%
extension $T_{\sigma }$ more simply by the single letter $S$, and in
general, $G-$extensions $T_{\sigma ^{\prime }}^{\prime }$ or $\left(
T_{1}\right) _{\sigma _{1}}$ will be denoted $S^{\prime }$ and $S_{1},$ and
so on$.$

Now fix an integer $n>1.$ Form the measure space $\left\{ \left[ n\right] ,%
\mathcal{P}\left( \left[ n\right] \right) ,p\right\} $ where $\left[ n\right]
=\left\{ 1,...,n\right\} $, and $p\left( \left\{ i\right\} \right) =\frac{1}{%
n},$ for each $i.$ Making use of the natural action of the symmetric group $%
\mathcal{S}_{n}$ on $\left[ n\right] ,$ each cocycle $\sigma $ for $T$
taking values in $\mathcal{S}_{n}$ determines an automorphism $U$ of $%
\left\{ X\times \left[ n\right] ,\mathcal{B\times C},\mu \times p\right\} $
which has $T$ as a factor. Namely, we have the automorphism $U$ given by 
\begin{equation*}
U^{n}\left( x,i\right) =\left( T^{n}x,\sigma \left( x,n\right) \left(
i\right) \right) .
\end{equation*}

We refer to $U$ as an $n-$point extension of $T.$ (Since we will only
consider ergodic $n-$point extensions, we may restrict ourselves to the
uniform measure $p$). We will use the same sort of notational convention as
above: the $n-$point extensions associated with pairs $\left( T^{\prime
},\sigma ^{\prime }\right) $ and $\left( T_{1},\sigma _{1}\right) $ will be
written $U^{\prime }$ and $U_{1},$ and so on.

Given a pair of $n-$point extensions $\,U_{1}$ and $U_{2}$ on spaces $%
X_{1}\times \left[ n\right] $ and $X_{2}\times \left[ n\right] ,$ we say $%
U_{1}$ is relatively isomorphic to $U_{2}$ if there is an isomorphism $\Phi $
from $U_{1}$ to $U_{2}$ that preserves the fibers of these extensions. That
is, there is an isomorphism of the form 
\begin{equation*}
\left( x,i\right) \mapsto \left( \phi \left( x\right) ,\alpha \left(
x\right) \left( i\right) \right)
\end{equation*}%
where $\alpha :X_{1}\rightarrow \mathcal{S}_{n}.$ Equivalently, these
extensions are relatively isomorphic if there is an isomorphism $\phi $ from 
$T_{1}$ to $T_{2}$ and a function $\alpha :X_{1}\rightarrow \mathcal{S}_{n}$
such that%
\begin{equation*}
\sigma _{2}\left( \phi x,n\right) =\alpha \left( T_{1}^{n}\left( x\right)
\right) \sigma _{1}\left( x,n\right) \alpha \left( x\right) ^{-1}
\end{equation*}%
which is exactly the condition that the $\mathcal{S}_{n}-$extensions $S_{1}$
and $S_{2}$ are $\mathcal{S}_{n}-$isomorphic. We also note that every
speedup of $T_{1}$ (or equivalently, every $\mathcal{S}_{n}-$speedup of $%
S_{1}$) corresponds to a speedup of $U_{1}$ relative to $T_{1}.$

Given $n-$point extensions $U_{1}$ and $U_{2}$, let us write $%
U_{1}\rightsquigarrow U_{2}$ when there is a speedup of $U_{1}$ relative to $%
T_{1}$ which is relatively isomorphic to $U_{2}.$ This relation is evidently
transitive and apparently asymmetric; there is no reason to suppose that $%
U_{1}\rightsquigarrow U_{2}$ implies $U_{2}\rightsquigarrow U_{1}$. In the
case of ergodic finite group extensions, however, (as well as for more
general locally compact second countable group extensions), we have just
seen that it is symmetric, and in fact for each locally compact second
countable group $G$ there is only one equivalence class of ergodic $G-$%
extensions. But, for general ergodic $n-$point extensions, we will see that
the relation is indeed asymmetric. This is due to the fact that the
associated $\mathcal{S}_{n}-$ extensions, of which the ergodic $n-$point
extensions are factors, need not themselves be ergodic. By examining the
ergodic components of these $\mathcal{S}_{n}-$ extensions, we will obtain a
characterization of this relation in other terms, and we will give an
explicit example to illustrate its asymmetry.

Fix an automorphism $T$ and an $\mathcal{S}_{n}-$cocycle $\sigma .$ Let $S$
be the associated $\mathcal{S}_{n}-$ extension of $T.$ We associate to the
pair $\left( T,\sigma \right) $ a conjugacy class of subgoups of $\mathcal{S}%
_{n}$ that will be the basis of the characterization. We recall the
discussion that can be found in $\left[ G\right] $: Let $C$ be an ergodic
component of $S.$ For each $x\in X,$ let $C_{x}=\left\{ \gamma \in \mathcal{S%
}_{n}:\left( x,\gamma \right) \in C\right\} .$ (By the ergodicity of $T,$ $%
\left\vert C_{x}\right\vert \geq 1$ is a constant). Then if $\beta
:X\rightarrow \mathcal{S}_{n}$ is any measurable function such that $\left(
x,\beta \left( x\right) \right) \in C$ almost everywhere, there\ is a
subgroup $G$ of $\mathcal{S}_{n}$ such that the sets $\beta \left( x\right)
^{-1}C_{x}$ are almost all equal to $G.$ Moreover, letting $\alpha \left(
x\right) =\beta \left( x\right) ^{-1},$ and defining a new cocycle by $%
\sigma ^{\prime }\left( x,n\right) =\alpha \left( T^{n}x\right) \sigma
\left( x,n\right) \alpha \left( x\right) ^{-1},$\ we get a new $\mathcal{S}%
_{n}-$extension $S^{\prime }$ that is $\mathcal{S}_{n}-$isomorphic to $S.$
The map 
\begin{equation*}
\left( x,\gamma \right) \mapsto \left( x,\alpha \left( x\right) \gamma
\right)
\end{equation*}%
is an $\mathcal{S}_{n}-$isomorphism from $S$ to $S^{\prime },$ which carries 
$C$ to $X\times G,$ on which $S^{\prime }$ is ergodic. In summary: $\sigma $
is cohomologous to a $G-$valued cocycle $\sigma ^{\prime }$ yielding an $%
\mathcal{S}_{n}-$extension $S^{\prime }$ that has $X\times G$ as an ergodic
component. (In particular, the values of $\sigma ^{\prime }$ lie in $G$).
The groups $G$ that fit this description form a conjugacy class of subgroups
of $\mathcal{S}_{n}.$ We denote this conjugacy class by $gp\left( T,\sigma
\right) .$

We can easily extend the above discussion to a slightly more general
context: Suppose that, for some subgroup $G\subset \mathcal{S}_{n},$ $%
X\times G$ is $S-$invariant, but is not an ergodic component of $S$. Then we
can apply the above arguments to an ergodic component of $S$ that is
contained in $X\times G,$ and conclude that there is a subgroup $H\subset G$
and a cocycle $\sigma ^{\prime }$ cohomologous to $\sigma $ via a $G-$valued
transfer function, so that $X\times H$ is an ergodic component of the $%
\mathcal{S}_{n}-$extension $S^{\prime }$ associated with $\sigma ^{\prime }$.

For brevity, when $X\times G$ is an ergodic component of an $\mathcal{S}%
_{n}- $extension $S,$ we will say that $\sigma $ is $G-$ergodic for $T.$
(This is equivalent to the \textquotedblleft $G-$interchange
property\textquotedblright\ that Gerber introduced in \cite{G}).

It is clear that $gp\left( T,\sigma \right) $ is an invariant of factor
isomorphism. In \cite{G} Gerber showed that it is a complete invariant for
factor orbit equivalence of ergodic $n-$point extensions. In connection with
speedups, we now prove:

\begin{theorem}
\label{nptext}Let $U_{1}$ and $U_{2}$ be ergodic $n-$point extensions of
transformations $\left( T_{1},X_{1}\right) $ and $\left( T_{2},X_{2}\right) $
by $\mathcal{S}_{n}-$valued cocycles $\sigma _{1}$ and $\sigma _{2}.$ Then $%
U_{1}\rightsquigarrow U_{2}$ if and only if for some $G_{1}\in gp\left(
T_{1},\sigma _{1}\right) \ $(and hence for every $G_{1}\in gp\left(
T_{1},\sigma _{1}\right) $), there exists $G_{2}\in gp\left( T_{2},\sigma
_{2}\right) $ such that $G_{2}\subset G_{1}.$
\end{theorem}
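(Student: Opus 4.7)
The plan is to use the equivalence, noted in the excerpt, between $U_1^p$ being relatively isomorphic to $U_2$ and the $\mathcal{S}_n$-extensions $(T_1^p)_{\sigma_1^{(p)}}$ and $(T_2)_{\sigma_2}$ being $\mathcal{S}_n$-isomorphic. Two facts will drive the argument: $gp$ is an invariant of $\mathcal{S}_n$-isomorphism (immediate from its definition via ergodic components), and the refinement of the $gp$ construction recalled in the excerpt, namely that whenever $X\times G$ is an $S$-invariant set (not necessarily an ergodic component), one can, after $G$-valued cohomology, realize some $X\times H$ with $H\subset G$ as an ergodic component of the resulting extension.

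For the forward direction, suppose $U_1 \rightsquigarrow U_2$ via a speedup function $p$, so that $(T_1^p)_{\sigma_1^{(p)}}$ and $(T_2)_{\sigma_2}$ are $\mathcal{S}_n$-isomorphic and hence $gp(T_1^p,\sigma_1^{(p)}) = gp(T_2,\sigma_2)$. Fix $G_1\in gp(T_1,\sigma_1)$ and, by an $\mathcal{S}_n$-valued cohomology applied to $\sigma_1$, arrange that $\sigma_1$ is $G_1$-valued with $X_1\times G_1$ an ergodic component of $S_1$. Since $\sigma_1^{(p)}(x,1)=\sigma_1(x,p(x))$ is a word in values of $\sigma_1$, it is also $G_1$-valued, so $X_1\times G_1$ is invariant (though not necessarily ergodic) under $(T_1^p)_{\sigma_1^{(p)}}$. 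The refinement tool now yields a subgroup $G_2'\subset G_1$ representing a class in $gp(T_1^p,\sigma_1^{(p)})=gp(T_2,\sigma_2)$.

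For the backward direction, suppose $G_2\subset G_1$ with $G_i\in gp(T_i,\sigma_i)$. After cohomology adjustments, $\sigma_i$ is $G_i$-valued and $X_i\times G_i$ is an ergodic component of $S_i$. Since $G_2\subset G_1$, $\sigma_2$ is also $G_1$-valued, and the restrictions $\hat S_i:=S_i|_{X_i\times G_1}$ are $G_1$-extensions of $T_i$ --- ergodic for $i=1$, possibly not for $i=2$. Apply Corollary \ref{discrete gp ext} to $\hat S_1$ and $\hat S_2$ (the base $T_2$ is aperiodic since $U_2$ is ergodic) to obtain a relative speedup of $\hat S_1$ that is $G_1$-isomorphic to $\hat S_2$ via transfer function $\alpha\equiv e_{G_1}$. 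Because $\alpha$ is trivial, the underlying cocycles match exactly through the base isomorphism $\phi$, so $(x,\gamma)\mapsto(\phi x,\gamma)$ is an $\mathcal{S}_n$-isomorphism between the full extensions $(T_1^p)_{\sigma_1^{(p)}}$ and $(T_2)_{\sigma_2}$, giving $U_1^p\cong U_2$ relatively.

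The hard part is the forward direction's subgroup extraction: the ergodic decomposition of $(T_1^p)_{\sigma_1^{(p)}}$ inside $X_1\times G_1$ may be strictly finer than the decomposition of $S_1$ there, so the subgroup produced must be shown to lie inside $G_1$ rather than only inside $\mathcal{S}_n$. This is exactly what the excerpt's generalization of the $gp$ construction delivers, so once that tool is invoked the remaining work --- verifying that cohomology adjustments commute with speedups (a direct check, $(\sigma^\alpha)^{(p)}=(\sigma^{(p)})^\alpha$) and that a $G_1$-valued trivial transfer lifts to an $\mathcal{S}_n$-isomorphism --- is routine.
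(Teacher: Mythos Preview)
Your proof is correct. The forward direction is essentially identical to the paper's: both arrange $\sigma_1$ to be $G_1$-valued, note that $X_1\times G_1$ remains invariant under the speedup, and invoke the refinement of the $gp$ construction to extract $G_2\subset G_1$ representing $gp(T_1^p,\sigma_1^{(p)})=gp(T_2,\sigma_2)$.

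The backward direction, however, takes a genuinely different route. The paper first \emph{induces} $S_1$ on $X_1\times G_2$; it checks that the first return time to $X_1\times G_2$ depends only on $x$ (so the induced map is a $G_2$-extension of a speedup $T_1^j$), and then applies Theorem~\ref{gp ext unif} with $G=G_2$ to this ergodic $G_2$-extension and to $S_2\upharpoonright_{X_2\times G_2}$. You instead restrict both $S_i$ to $X_i\times G_1$ and apply Corollary~\ref{discrete gp ext} with $G=G_1$ directly, exploiting the trivial transfer function $\alpha\equiv e_{G_1}$ to lift the resulting $G_1$-isomorphism verbatim to an $\mathcal S_n$-isomorphism. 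Your approach is shorter, avoiding the inducing step and the verification that the return time is fiber-independent; the trade-off is that it relies on the sharper discrete-group statement (trivial transfer), whereas the paper's argument would go through even with a nontrivial $G_2$-valued transfer, since any such transfer is already $\mathcal S_n$-valued.
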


\begin{proof}
Suppose first that for some $G_{1}\in gp\left( T_{1},\sigma _{1}\right) ,\ $%
there exists $G_{2}\in gp\left( T_{2},\sigma _{2}\right) $ such that $%
G_{2}\subset G_{1}.$ By the above discussion, for each $i=1,2,$ there is a
cocycle $\sigma _{i}^{\prime }$ cohomologous to $\sigma _{i},$ so that $%
X_{i}\times G_{i}$ is an ergodic component of $\left( T_{i}\right) _{\sigma
_{i}}$. Therefore, without loss of generality, we may assume from the start
that each $\left( T_{i},\sigma _{i}\right) $ has this property.

$S_{1}$ induces an ergodic transformation $\left( S_{1}\right) _{X_{1}\times
G_{2}}$ on $\left( X_{1}\times G_{2}\right) $ and, for each $\left(
x,g_{2}\right) \in X_{1}\times G_{2}$ we let $j=j\left( x,g_{2}\right) $
denote the first return time of $\left( x,g_{2}\right) $ to $X_{1}\times
G_{1}$ under $S_{1}.$ But $j\left( x,g_{2}\right) $ depends only on $x$
since, for all $\left( x,g_{2}\right) \in X_{1}\times G_{2}$ and all $n\in 
\mathbb{N}$, 
\begin{equation*}
S_{1}^{n}\left( x,g_{2}\right) \in \left( X_{1}\times G_{2}\right) \iff
\sigma _{1}\left( x,n\right) g_{2}\in G_{2}\iff \sigma _{1}\left( x,n\right)
\in G_{2}
\end{equation*}%
so that these conditions do not depend on $g_{2}.$ We can then write $%
j\left( x,g_{2}\right) =j\left( x\right) ,\ $and the induced automorphism 
\begin{equation*}
\left( S_{1}\right) _{X_{1}\times G_{2}}=S_{1}^{j}\upharpoonright
_{X_{1}\times G_{2}}
\end{equation*}%
is an ergodic $G_{2}-$extension of the speedup $T_{1}^{j}$ of $T_{1}.$

Applying Theorem \ref{gp ext unif}$,$ we know that there is a $G_{2}-$%
speedup $\left( S_{1}^{j}\upharpoonright _{X_{1}\times G_{2}}\right) ^{k}$
of $S_{1}^{j}\upharpoonright _{X_{1}\times G_{2}}$ , that is $G_{2}-$%
isomorphic to $S_{2}\upharpoonright _{X_{2}\times G_{2}}.$ This gives us a
speedup $\left( T_{1}^{j}\right) ^{k}=T_{1}^{l}$ of $T_{1}$, and $\left(
S_{1}^{j}\upharpoonright _{X_{1}\times G_{2}}\right)
^{k}=S_{1}^{l}\upharpoonright _{X_{1}\times G_{2}}$is a $G_{2}-$extension of 
$T_{1}^{l}$.

There is an isomorphism $\phi :X_{1}\rightarrow X_{2}$ and $\alpha
:X_{1}\rightarrow G_{2}$ so that 
\begin{equation}
T_{1}^{l}\overset{\phi }{\approx }T_{2}\text{ and }\sigma _{2}\phi =\left(
\sigma _{1}^{\left( l\right) }\right) ^{\alpha }.  \label{isom}
\end{equation}

This construction gives us a speedup $U_{1}^{l}$of $U_{1}$ relative to $%
T_{1} $ and conditions $\left( \text{\ref{isom}}\right) $ say that $%
U_{1}^{l} $ is relatively isomorphic to $U_{2}.$

Now suppose that $U_{1}\rightsquigarrow U_{2}.$ Fix $G_{1}\in gp\left(
S_{1}\right) .$ We want to show that there is a subgroup $G_{2}\in sp\left(
S_{2}\right) $ contained in $G_{1}.$ As before, we may assume that $%
X_{1}\times G_{1}$ is an ergodic component of $S_{1}.$ The condition $%
U_{1}\rightsquigarrow U_{2}$ tells us that there is an $\mathcal{S}_{n}-$%
speedup $S_{1}^{k}$ of $S_{1}$ that is $\mathcal{S}_{n}-$isomorphic to $%
S_{2}.$ Since $X_{1}\times G_{1}$ is invariant for $S_{1},$ it remains so
for $S_{1}^{k}.$ Arguing as before, there is a subgroup $G_{2}\subset G_{1}$
and a cocycle $\bar{\sigma}$ for $T_{1}^{k},$ cohomologous to $\sigma
_{1}^{\left( k\right) },$ such that $X_{1}\times G_{2}$ is an ergodic
component of $\bar{S},$ where $\bar{S}$ is the $\mathcal{S}_{n}-$extension
of $T_{1}^{k}$ given by $\bar{\sigma}.$ But, since $\bar{S}$ is relatively
isomorphic to $S_{2},$ we must have $G_{2}\in gp\left( T_{2},\sigma
_{2}\right) .$\bigskip
\end{proof}

As an example, we consider a pair of $3-$point extensions considered by
Gerber in \cite{G}. Let $\left\{ \gamma _{i}\right\} _{i=1}^{6}$ be an
enumeration of the symmetric group $\mathcal{S}_{3}$ so that $\left\{ \gamma
_{i}\right\} _{i=1}^{3}$ is the alternating group $\mathcal{A}_{3}$. Let $%
\left( T_{1},X_{1}\right) $ be the full $3-$shift, with independent
generator $\mathcal{P}=\left\{ P_{i}\right\} _{i=1}^{3}$ and $\left(
T_{2},X_{2}\right) $ the full $6-$shift with independent generator $\mathcal{%
Q}=\left\{ Q_{i}\right\} _{i=1}^{6}.$ Define a cocycle $\sigma _{1}$ for $%
T_{1}$ by setting $\sigma \left( x,1\right) =\gamma _{i}$ when $x\in P_{i}.$
Define $\sigma _{2}$ for $T_{2}$ by setting $\sigma _{2}\left( x,1\right)
=\gamma _{i}$ when $x\in Q_{i}$. In \cite{G} Gerber showed that $gp\left(
S_{1}\right) =\left\{ \mathcal{A}_{3}\right\} $ and $gp\left\{ S_{2}\right\}
=\left\{ \mathcal{S}_{3}\right\} $ and that consequently $S_{1}$ and $S_{2}$
are not factor orbit equivalent. Using Theorem \ref{nptext} we can conclude
further that there is a factor speedup of $S_{2}$ that is factor isomorphic
to $S_{1},$ but there is no factor speedup of $S_{1}$ that is factor
isomorphic to $S_{2}.$\bigskip 

As a final application we observe how the classification of extensions
changes when we pass to extensions that are as close as possible to the
finite case. That is, we consider extensions with countable fibers, and
allow only the smallest natural group of permutations on the fibers. Let $p$
denote counting measure on $\mathbb{N}$ and $\mathcal{S}_{f}$ the group of
finitely supported permutations of $\mathbb{N}$. Suppose that $T$ is an
automorphism of the Lebesgue probability space $\left( X,B,\mu \right) ,$
and $\sigma $ is an $\mathcal{S}_{f}-$cocycle for $T.$ Then as in the finite
case, we obtain a countable extension $U$ of $T$ on $X\times \mathbb{N}$
given by%
\begin{equation*}
U^{n}\left( x,k\right) =\left( T^{n}x,\sigma \left( x,n\right) \left(
k\right) \right) .
\end{equation*}%
Here we say that two such countable extensions $U_{1}$ and $U_{2}$ are
relatively isomorphic if there is an isomorphism $\phi $ from $T_{1}$ to $%
T_{2}$ and a function $\alpha :X\rightarrow \mathcal{S}_{f}$ such that%
\begin{equation*}
\sigma _{2}\left( \phi x,n\right) =\alpha \left( T_{1}^{n}\left( x\right)
\right) \sigma _{1}\left( x,n\right) \alpha \left( x\right) ^{-1}.
\end{equation*}%
Since $\mathcal{S}_{f}$ is countable, Theorem \ref{gp ext unif} applies, and
the analysis of finite extensions which was given above can easily be
adapted to this case. Thus, to each ergodic countable extension $U$ of $T$
by an $\mathcal{S}_{f}-$valued cocycle $\sigma $, we associate a conjugacy
class $gp\left( T,\sigma \right) $ of subgroups of $\mathcal{S}_{f},$ and
the corresponding statement of Theorem \ref{nptext} holds.

\begin{proposition}
Given an ergodic $T,$ there is an uncountable family of ergodic countable
extensions of $T$, each of which acts on the fibers of the extension by
finitely supported permuations of $\mathbb{N}$, and no one of which admits a
relative speedup that is relatively isomorphic to another.
\end{proposition}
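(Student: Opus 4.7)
The plan is to invoke the countable analog of Theorem \ref{nptext} asserted just above the proposition: $U_1 \rightsquigarrow U_2$ iff some $G_2 \in gp(T_2,\sigma_2)$ is a subgroup of some $G_1 \in gp(T_1,\sigma_1)$. Accordingly the task reduces to exhibiting uncountably many ergodic countable extensions of $T$ whose associated conjugacy classes of subgroups of $\mathcal{S}_f$ form an antichain in the partial order where $[G]\leq[H]$ iff some representative of $[G]$ is a subgroup of some representative of $[H]$.

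For the antichain of subgroups I will use direct sums of cyclic groups of distinct prime orders. Let $p_1<p_2<\cdots$ enumerate the primes, fix pairwise disjoint finite subsets $B_i\subset\mathbb{N}$ with $|B_i|=p_i$, and let $C_i\subset\mathcal{S}_f$ be the cyclic group generated by a $p_i$-cycle $c_i$ supported on $B_i$. For $S\subseteq\mathbb{N}$ set $G_S=\bigoplus_{i\in S}C_i$. The combinatorial heart of the argument is the claim that $\pi G_S\pi^{-1}\subseteq G_T$ for some $\pi\in\mathcal{S}_f$ forces $S\subseteq T$: for each $i\in S$ the element $\pi c_i\pi^{-1}$ has prime order $p_i$ and lies in $G_T$, but any element of $G_T=\bigoplus_{j\in T}C_j$ of prime order $p_i$ must lie entirely in the unique summand $C_j$ whose order equals $p_i$, forcing $j=i\in T$. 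Now invoke a standard almost disjoint family $\{S_r\}_{r\in\mathbb{R}}$ of infinite subsets of $\mathbb{N}$ (e.g., identifying $\mathbb{Q}$ with $\mathbb{N}$, let $S_r$ be the range of a sequence of distinct rationals converging to $r$); for $r\neq r'$ the intersection $S_r\cap S_{r'}$ is finite while $S_r$ is infinite, so $S_r\not\subseteq S_{r'}$, and by the claim no conjugate of $G_{S_r}$ is a subgroup of $G_{S_{r'}}$.

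To realize each $[G_{S_r}]$ as $gp(T,\sigma_r)$ for an ergodic countable extension, I note that $G_{S_r}$, being a countable abelian group, is amenable. By the Herman-Zimmer result cited in the introduction there is an ergodic $G_{S_r}$-extension of $T$, given by a cocycle $\sigma_r\colon X\times\mathbb{Z}\to G_{S_r}$. Regarded as $\mathcal{S}_f$-valued via the inclusion $G_{S_r}\hookrightarrow\mathcal{S}_f$, the induced $\mathcal{S}_f$-extension has $X\times G_{S_r}$ as an ergodic component, so by the definition recalled immediately before the proposition we have $gp(T,\sigma_r)=[G_{S_r}]$. Denote the resulting countable extension on $X\times\mathbb{N}$ by $U_r$. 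Combining the theorem analog with the antichain property, $U_r\rightsquigarrow U_{r'}$ would require some conjugate of $G_{S_{r'}}$ to lie inside $G_{S_r}$, which fails for $r\neq r'$, giving the desired uncountable family.

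The main obstacle I anticipate is in the realization step: one has to verify that the Herman-Zimmer ergodic $G_{S_r}$-extension, reinterpreted as an $\mathcal{S}_f$-extension and hence as a countable extension on $X\times\mathbb{N}$, truly satisfies the paper's formal notion of \emph{ergodic countable extension} and really has $[G_{S_r}]$ as its $gp$-invariant (the intransitivity of $G_{S_r}$ on $\mathbb{N}$ requires one to be careful about which orbit component is being viewed as ergodic). Once the countable version of Theorem \ref{nptext} has been established via the transcription the authors assert is routine, the antichain-of-subgroups argument and the almost disjoint family construction are purely combinatorial, and the conclusion is immediate.
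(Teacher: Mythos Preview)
Your antichain-of-subgroups argument and the almost disjoint family construction are correct, and your invocation of the countable analogue of Theorem~\ref{nptext} is the right framework. The genuine gap is in the realization step, and it is precisely the issue you flag at the end but do not resolve: your groups $G_{S_r}=\bigoplus_{i\in S_r}C_i$ do \emph{not} act transitively on $\mathbb{N}$. The orbits of $G_{S_r}$ on $\mathbb{N}$ are the finite blocks $B_i$ for $i\in S_r$ together with singletons elsewhere. Consequently, for every orbit $O$ of $G_{S_r}$ on $\mathbb{N}$, the set $X\times O$ is $U_r$-invariant and of positive finite measure, so the countable extension $U_r$ on $X\times\mathbb{N}$ is \emph{not ergodic}. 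The proposition explicitly asks for ergodic countable extensions, and the analogue of Theorem~\ref{nptext} (and indeed the very definition of $gp(T,\sigma)$) is formulated for ergodic extensions, so your objects do not satisfy the hypotheses. This cannot be repaired by restricting to a single orbit, since every $G_{S_r}$-orbit in $\mathbb{N}$ is finite and you would then have only a finite extension.

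The paper's proof avoids this by choosing subgroups of $\mathcal{S}_f$ that act transitively on $\mathbb{N}$: for each partition $\mathcal{P}$ of $\mathbb{N}$ into two-element blocks, $G_{\mathcal{P}}=\{\pi\in\mathcal{S}_f:\pi\text{ permutes the blocks of }\mathcal{P}\}$. Transitivity of $G_{\mathcal{P}}$ on $\mathbb{N}$, combined with ergodicity of the $G_{\mathcal{P}}$-extension furnished by Herman's theorem, is exactly what makes the associated countable extension $U$ ergodic (via Lemma~\ref{little push}). The antichain property then comes from the observation that $\xi G_{\mathcal{P}}\xi^{-1}=G_{\xi\mathcal{P}}$, so if $\mathcal{P}$ and $\mathcal{P}'$ have infinite symmetric difference, no $\mathcal{S}_f$-conjugate of $G_{\mathcal{P}}$ can contain $G_{\mathcal{P}'}$. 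Your prime-order argument is a perfectly good way to produce an antichain of conjugacy classes in $\mathcal{S}_f$, but the groups it produces are unusable here; the construction must simultaneously deliver transitivity on $\mathbb{N}$ and the conjugacy-antichain property.
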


\begin{proof}
We first construct an uncountable family of subgroups of $\mathcal{S}_{f}$
such that $\left( i\right) ,$ each acts transitively on $\mathbb{N},$ and $%
\left( ii\right) ,$ no conjugate of one contains another. Suppose $\mathcal{%
P=}\left\{ A_{k}\right\} _{k=1}^{\infty }$ is a partition of $\mathbb{N}$
into two-element sets. Let 
\begin{equation*}
G_{\mathcal{P}}=\left\{ \pi \in \mathcal{S}_{f}:\left( \forall k\right)
\left( \exists j\right) \pi \left( A_{k}\right) =A_{j}\right\} .
\end{equation*}%
It is clear that $G_{\mathcal{P}}$ acts transitively on $\mathbb{N}$. For $%
\xi \in \mathcal{S}_{f}$ we write $\xi \mathcal{P=}\left\{ \xi A\mid A\in 
\mathcal{P}\right\} $ and\ observe that $\xi G_{\mathcal{P}}\xi ^{-1}=G_{\xi 
\mathcal{P}}.$ Hence, if $\mathcal{P}^{\prime }$ is another partition of $%
\mathbb{N}$ into two-element sets and the symmetric difference of $\mathcal{P%
}$ and $\mathcal{P}^{\prime }$ is infinite, then no conjugate of $G_{%
\mathcal{P}}$ can contain $G_{\mathcal{P}^{\prime }}.$ It is easy to
construct an uncountable family $\left\{ \mathcal{P}_{i}\right\} _{i\in I}$
of such partitions so that each pair has an infinite symmetric difference.
The corresponding family of subgroups $\left\{ G_{\mathcal{P}_{i}}\right\}
_{i\in I}$ satisfies conditions $\left( i\right) $ and $\left( ii\right) $
above.

Each of the groups $G_{\mathcal{P}_{i}}$ is countable and amenable, and
hence, by the result of Herman \cite{H}$,$ for each $G_{P_{i}}$ there is a
cocycle $\sigma _{i}$ for $T$ for which the corresponding $G_{\mathcal{P}%
_{i}}-$extension $S_{i}$ is ergodic. But since $G_{\mathcal{P}_{i}}$ acts
transitively on $\mathbb{N}$, the corresponding countable extension $U_{i}$
is also ergodic. Indeed, for all sets $A$ and $B$ contained in $X$ of
positive measure, and all $l,m\in \mathbb{N},$ if we choose $\pi \in
G_{P_{i}}$ such that $\pi \left( l\right) =m,$ then Lemma $\ref{little push}$
gives an $n^{\prime }\in \mathbb{N}$ and $A^{\prime }\subset A$ with $\mu
\left( A^{\prime }\right) >0$ such that $T^{n^{\prime }}\left( A^{\prime
}\right) \subset B$ and for all $x\in A^{\prime },$ $\sigma \left(
x,n^{\prime }\right) =\pi .$ Condition $\left( ii\right) $ on the groups $G_{%
\mathcal{P}_{i}}$ tells us that for all $i\neq j$ in $I,$ no speedup of $%
U_{i}$ relative to $T$ is relatively isomorphic to $U_{j}.$
\end{proof}

\end{document}